 \newtheorem{theorem}{Theorem}[section]
    \newtheorem{proposition}[theorem]{Proposition}
    \newtheorem{lemma}[theorem]{Lemma}
    \newtheorem{corollary}[theorem]{Corollary}
   \numberwithin{equation}{section}
    \numberwithin{theorem}{section}
    \theoremstyle{definition}
\newtheorem{remark}[theorem]{Remark}{}
 \newtheorem{Definition}[theorem]{Definition}{}
    \newcommand{\nn}{\nonumber}
    \newcommand{\bb}[1]{{\mathbb #1}}
   \renewcommand{\epsilon}{\varepsilon}
    \newcommand{\e}{\mbox{e}}
\begin{document}
     \title {An expansion for the number of partitions of an integer}
    \author {Stella Brassesco  \and Arnaud Meyroneinc}
     \date{}
     \maketitle
     
    \begin{center} {Departamento de Matem\'aticas,
    Instituto Venezolano de Investigaciones Cient\'{\i}ficas, Apartado Postal
    20632 Caracas
    1020--A, Venezuela \\
  sbrasses@ivic.gob.ve\quad ameyrone@ivic.gob.ve}
   \end{center}
   
   \bigskip
   
   {\it Dedicated to the memory of Luis B\'aez Duarte}
  
  \bigskip

      \begin{abstract} 
    
    We  consider $p(n)$ the number of partitions of a natural number
      $n$, starting from an expression derived by L. B\'aez-Duarte in \cite{LBD}
      by relating its generating function  $f(t)$ with 
      the characteristic functions of a family of sums of independent 
      random variables indexed by $t$.   
       The asymptotic formula for   $p(n)$  follows then from a local central limit theorem as
        $t\uparrow 1$ suitably with $n\to \infty$.
       We take further that  analysis and compute formulae for  the terms that
       compose that expression, which accurately  approximate them as $t\uparrow 1$. 
         Those include the generating function $f$ and the
        cumulants of the random variables. After developing an asymptotic series expansion
         for the integral term we obtain 
      an expansion for $p(n)$ that can be simplified as follows: for each $N>0$, 
    \[
     p(n)=\frac{2\pi^2}{3\sqrt 3}
     \,\frac{\e^{r_n}}{(1+2\,r_n)^2}\,
     \big(1-\sum_{\ell= 1}^N\,\frac{D_\ell}{(1+2\,r_n)^\ell }+\,\mathcal R_{N+1}\,\big).
     \]
      The coefficients $D_\ell$ are positive and  have  simple   expressions as finite  sums of
     combinatorial numbers,  $r_n=\sqrt{\frac {2\pi^2}{3}\,(n-\frac{1}{24})+\frac 14}$ 
    and the remainder satisfies $n^{N/2}\,\mathcal R_{N+1} \to 0$ as $n\to \infty$.  

The cumulants are given by series of rational
      functions and the approximate formulae obtained  could be also  of independent interest in
       other contexts.   
       \end{abstract}
    \noindent {\footnotesize Keywords: Integer partitions, Asymptotic expansions, Asymptotic formulae, 
    Central limit theorem, Cumulants.\\
      11P82 (05A16 05A17)}
     \section {Introduction}
       For each  $n\in \bb N$,  consider  $p(n)$ the partition number of $n$, that is,
        the number of non increasing positive  integer sequences such that the sum of
        its terms adds to $n$. The study of partitions goes back to L. Euler,
         who established
        in particular the generating function $f$ of the sequence $p(n)$, 
           	\begin{equation}
      	\label{eulerf}
      	f(t)=\sum _{n\ge 0} p(n)\, t^n =\prod _{j\ge 1}\frac{1}{1-t^j},\quad  t\in [0,1).
      	\end{equation}
        
 In a celebrated article published a century ago \cite{HR18}, 
  G.H. Hardy and S. Ramanujan   considered the order of magnitude 
   of $p(n)$ as $n\to \infty$, and obtained an asymptotic series expansion 
   which in particular yields the 
  asymptotic formula 
  \begin{equation} 
  \label{hra}
  p(n)\sim \frac{\e^{\sqrt{\frac{2\,\pi^2}{3}\,n}}}{4\,n\,\sqrt 3}\quad \mbox{as } n\to \infty.
  \end{equation}
  H. Rademacher in \cite{Rad37} considered a modification of  the  asymptotic  series 
 that turns it convergent, and yields  a  value for 
  $p(n)$ that differs by decimals from its  exact value 
   when taking of the order of  $\sqrt n$ terms for large $n$. 
  
  Much more recently,  J.H. Bruinier and K. Ono in \cite{BO13} deduced
    an exact formula for 
  $p(n)$ that expresses it as a finite sum of algebraic quantities.
 In  \cite{DM} M. Dewar and
     R. Murty developed this formula to derive  an asymptotic expansion
       in the spirit of that of Rademacher.

L. B\'aez Duarte in \cite{LBD} considered a formula for $p(n)$, that, when approximating 
some of the terms  with the aid of  the Euler--Maclaurin
summation formula, yields \eqref{hra} as a consequence of a local central limit theorem.

  With that formula as starting point, we develop an asymptotic  expansion 
  for $p(n)$, consisting of a factor growing exponentially in $\sqrt{n}$,
 times  an asymptotic series expansion  given in terms of inverse powers of a certain quantity 
that is very nearly a rational function of $n$ growing as $\sqrt n$,  while the coefficients are 
explicitly given as sums of combinatorial numbers. 
Although the series is shown to be  convergent for any $n\in \mathbb N$, 
it is  really an asymptotic expansion, with errors 
of the order $o(n^{-N/2})$ as  $n\to \infty$ when taking $N$ terms.

 These results  are derived from 
an essentially real variable analysis, avoiding the 
difficulties due to 
the complicated structure of the function $f$ near
  the border of the unit circle in $\mathbb C$. The final expansion can be simplified 
  to obtain  an explicit expression in terms of rational functions of $n$.  
 
We compare
  the approximations  resulting  by truncating  the simplified expansion 
   with the true value of $p(n)$ for 
  several values of $n$, to illustrate the results.

     \section { Preliminaries and statement of results} 
 \label{SPS}      
As discussed in  \cite{LBD} (see also the references therein),
 the series in   \eqref{eulerf} determines for each  $t\in (0,1)$
 a random variable  $X(t)$ defined on some probability space and  taking  values on $\bb N$
 by taking
      $\bb P\big(X(t)=n\big)=\frac{p(n)\,t^n}{f(t)}$. We denote by $\mathbb P$ the  probability
       and by $\mathbb E$ the expectation
       with respect to $\mathbb P$. 
  The characteristic function of $X(t)$  is directly computed: 
      \begin{equation}
      \label{deffi}
      \varphi^{}_{X(t)}(\theta):=\bb E\, \e^{i\,\theta X(t)}=\frac{f(\e^{i\theta}\,t)}{f(t)}.
      \end{equation}
      Each factor $f_j(t):= \frac{1}{1-t^j}=\sum_{k\ge 0}t^{jk}$
       in \eqref{eulerf} determines analogously 
      a random variable $X_j(t)$ taking values  on the multiples of
       $j$:\[
       \bb P(X_j(t)=kj)=\frac{t^{jk}}{f_j(t)},
       \] whose characteristic function is 
      $\varphi^{}_{X_j(t)}(\theta)=\frac{f_j(\e^{i\theta}t)}{f_j(t)}$. 
      It follows then that the random variable $X(t)$ can be expressed
       as the infinite 
       sum of the  independent random variables $X_j(t)$.  
  Moreover, $Y_j(t)=\frac{X_j(t)}{j}$ is a random variable
   with geometric distribution with parameter $(1-t^j)$. In other words, 
   \begin{equation}
      \label{xsum}
      X(t)=\sum _{j\ge 1} j\, Y_j(t),
      \end{equation} 
      with $Y_j(t)$ independent   geometric random variables. 
    The mean and variance of $X(t)$ are directly computed,
      \begin{align}
      \label{mt}
    \kappa_1(t)&:=\bb E X(t)=  \sum _{j\ge 1} j\, \bb E Y_j(t)\,=
     \sum _{j\ge 1} j\,\frac{t^j}{1-t^j}
    \\
    \label{sigmat}
    \kappa_2(t)&:={\rm Var} (X(t))= \sum _{j\ge 1} j^2\,{\rm Var} (Y_j(t))\,=
    \sum _{j\ge 1} j^2\,\frac{t^j}{(1-t^j)^2}.
    \end{align}
    On the other hand  \eqref{deffi} may be rewritten in terms of the sequence $p(n)$ 
    \[
    \varphi^{}_{X(t)}(\theta)=\frac{1}{f(t)}\sum_{n\ge 1}\e^{i\theta n}\,p(n)\,t^n ,
    \]
    which yields by Fourier inversion 
    \begin{equation}
    \nn
    p(n)=\frac{f(t)}{2\pi t^n}\int^{\pi}_{-\pi}\e^{-in\theta}\varphi^{}_{X(t)}(\theta) d\theta. 
    \end{equation}
 Observe that the series  \eqref{mt} and \eqref{sigmat}
  are convergent for $t\in[0,1)$, and that 
     both $\kappa_1(t)\uparrow \infty$ and $\kappa_2(t)\uparrow \infty$ as $t\uparrow 1$. 
     We obtain indeed the precise asymptotic behaviour as $t\uparrow 1$  
   of $\kappa_1(t)$ and $\kappa_2(t)$ in Corollary \ref{cormsigma}. 
     Let us define  then the sequence
     \begin{equation}
     \label{deftn}
     t_n =\{ \mbox {the unique solution to $\kappa_1(t_n)=n$}\},
     \end{equation}
     and note  that $t_n \uparrow 1$ as $n\to \infty$. 
     Substitute  and  change variables in 
    the integral above to obtain 
    \begin{equation}
    \label{pn}
    p(n)=\frac{f(t_n)}{2\pi \sigma(t_n)\, t_n^n}
    \int^{\pi\sigma(t_n)}_{-\pi\sigma(t_n)}\bb E\, \e^{i\theta\,Z(t_n)}\, d\theta,
    \end{equation}
    where  $Z(t_n)$ is obtained by suitably centering and scaling $X(t_n)$:
     \begin{equation}
     \label{zt}
Z(t_n):=\frac{X(t_n)-\kappa_1(t_n)}{\sigma(t_n)}=\frac{X(t_n)-n}{\sigma(t_n)},
\end{equation}
and we have denoted as usual by  $\sigma(t)=\sqrt {\kappa_2(t)}$ the standard deviation of $X(t)$.  
From \eqref{xsum},   \eqref{mt},  \eqref{sigmat} and \eqref{deftn},
  the  central limit theorem implies that  
 $Z(t_n)$ is asymptotically normal (as $n\to \infty$). 
Moreover, in \cite{LBD}, it is proved that a local central limit theorem holds
and the integral above converges to $\sqrt {2 \pi}$,  what yields after
 determining the asymptotics of the terms 
in \eqref{pn} as $n\to \infty$ the asymptotic formula \eqref{hra}. 

We obtain accurate approximations (as $n\to \infty$) for the terms in the factor, and 
an asymptotic series   expansion for the integral term in \eqref{pn}, following by expanding the integrand 
in terms of the cumulants of the random variables. 
The resulting quantities  do not seem to  have   simple expressions in terms of $n$,
but we show that they are well approximated by elementary functions. Indeed, 
let us define 
\begin{equation}
\label{defrn}
r_n:=\sqrt{\frac {2\pi^2}{3}\,(n-\frac{1}{24})+\frac 14}.
\end{equation}
 The expansion that we obtain turns out to be given in terms of  $c_n^2:=\kappa_2(t_n)\,|\log t_n|^2$, 
 which is  
 very well approximated (as $n\to\infty$) by $r_n$. Precisely, it follows from  Lemma \ref{lemcn} that
  for some positive constant $C$,  
\begin{equation}
\label{approxcn}
|c_n^2-
 r_n|
  \le Cn\,\e^{-2\pi\sqrt{24n-1}}. 
\end{equation}
Most of our formulae are given in terms of  $|\log t_n|$, that is shown 
in Corollary \ref{cortn} to satisfy:   
  \begin{equation}
 \label{approxlogtn}
 \big|\,|\log t_n|\,-
 \,\frac{2\pi^2}{3\,\big(1+2\,r_n\,\big)}\,\big|
 \le C\,\e^{-2\,\pi\sqrt{24n-1}}.   
\end{equation}
 In terms of those quantities, we can now state our main results. 
\begin{theorem}
\label{Thpp}
For any given $N>0$, $p(n)$ satisfies 
\begin{equation}
\label{asymexp}
p(n)=
 \,\frac{ {\rm e}^{r_n}\,|\log t_n|^{\frac 32}}{\sqrt {2}\,\pi\,\sqrt{1+2\,c_n^2}}\,\big(1-
\sum_{\ell= 1}^N\,\frac{D_\ell\,}{(1+2\,c_n^2)^\ell }\,
+\widetilde {\mathcal R}_{N+1}\,\big),
\end{equation}
 where: 
 \begin{itemize}
  \item[$\circ$] The coefficients $D_\ell$ are given by 
  \begin{equation}
  \label{defDl}
  D_\ell=(-1)^{\ell+1}\,\frac{(\ell+1)}{4^\ell} \sum_{k=0}^{\ell+1}(-1)^k\, 2^k\,\binom{2\ell}{k}
\frac{1}{(\ell+1-k)!},
\end{equation}
and satisfy $0<D_\ell<2\,3^{\ell-1}$. 
\item[$\circ$] The error $\widetilde {\mathcal R}_{N+1}$ satisfies
 $n^{N/2}\,\widetilde {\mathcal R}_{N+1}\to 0$ 
as $n\to \infty$.
   \end{itemize}
\end{theorem}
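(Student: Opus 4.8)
The plan is to feed into the exact identity \eqref{pn} the precise asymptotics of each of the three factors $f(t_n)$, $t_n^{-n}$, $\sigma(t_n)^{-1}$, together with an asymptotic expansion of the oscillatory integral obtained by expanding $\bb E\,\e^{i\theta Z(t_n)}$ in cumulants, and then to observe that almost everything collapses onto the stated form because of the two structural facts available here: the defining relation $\kappa_1(t_n)=n$, and the cumulant identity $\kappa_{m+1}(t)=t\,\tfrac{d}{dt}\kappa_m(t)$, i.e.\ $\kappa_m(\e^{-s})=(-1)^m\tfrac{d^m}{ds^m}\log f(\e^{-s})$.

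First I would treat the factor $f(t_n)\,t_n^{-n}=f(t_n)\,\e^{n|\log t_n|}$. Writing $s=|\log t_n|$ and using the Euler--Maclaurin (modular) expansion of $\log f(\e^{-s})$ established earlier, which is of the form $\tfrac{\pi^2}{6s}-\tfrac{s}{24}+\tfrac12\log\tfrac{s}{2\pi}$ plus a term exponentially small in $1/s$ (hence, on the curve $t=t_n$, exponentially small in $\sqrt n$), one gets by differentiation the matching expansions of $\kappa_1(\e^{-s})$ and $\kappa_2(\e^{-s})$ (these are Corollary~\ref{cormsigma}). Then $\kappa_1(t_n)=n$ forces $(n-\tfrac1{24})s+\tfrac{\pi^2}{6s}$ to equal $\kappa_2(t_n)\,s^2=c_n^2$ up to an exponentially small error, so that
\[
\log\frac{f(t_n)}{t_n^{\,n}}=(n-\tfrac1{24})\,s+\frac{\pi^2}{6s}+\frac12\log\frac{s}{2\pi}+(\text{exp.\ small})=c_n^2+\frac12\log\frac{s}{2\pi}+(\text{exp.\ small}).
\]
Hence $f(t_n)/t_n^{\,n}=\e^{c_n^2}\sqrt{s/2\pi}\,(1+(\text{exp.\ small}))$, and since $\sigma(t_n)=c_n/|\log t_n|$ this rewrites \eqref{pn} as
\[
p(n)=\frac{|\log t_n|^{3/2}\,\e^{c_n^2}}{(2\pi)^{3/2}\,c_n}\;\mathcal I_n\,\big(1+(\text{exp.\ small})\big),\qquad \mathcal I_n:=\int_{-\pi\sigma(t_n)}^{\pi\sigma(t_n)}\bb E\,\e^{i\theta Z(t_n)}\,d\theta .
\]

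Next I would expand $\mathcal I_n$. On $|\theta|\le T_n$ with $1\ll T_n\ll c_n$ the cumulant series converges; inserting the cumulant formulae $\kappa_m(\e^{-s})=\tfrac{\pi^2}{6}m!\,s^{-m-1}-\tfrac12(m-1)!\,s^{-m}+(\text{exp.\ small})$, summing the resulting geometric-type series and once more using $\kappa_1(t_n)=n$ to replace $s$ by $c_n$, one finds, up to exponentially small terms,
\[
\bb E\,\e^{i\theta Z(t_n)}=\sqrt{1-\tfrac{i\theta}{c_n}}\;\exp\!\Big(-\frac{c_n\,\theta^2}{2(c_n-i\theta)}-\frac{\theta^2}{4\,c_n(c_n-i\theta)}+\frac{i\theta}{2c_n}\Big).
\]
The decisive move is the change of variables $v=\theta\,(c_n^2-ic_n\theta)^{-1/2}$ (so $\theta\approx v\,c_n$ and $1-i\theta/c_n=(c_n v/\theta)^2$), under which the first two terms of the exponent become exactly the pure Gaussian $-\tfrac14(1+2c_n^2)\,v^2$ and the integral takes the form $c_n\int\Phi(v)\,\e^{-\frac14(1+2c_n^2)v^2}\,dv$ with $\Phi$ an explicit algebraic function (built from the Jacobian, the factor $(4-v^2)^{-1/2}$, and the phase $\e^{i\theta/2c_n}$ coming from the $-\tfrac1{2s}$ term of $\kappa_1$) normalized by $\Phi(0)=1$. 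Since $\int\e^{-\frac14(1+2c_n^2)v^2}dv=2\sqrt\pi/\sqrt{1+2c_n^2}$, the leading term is $2\sqrt\pi\,c_n/\sqrt{1+2c_n^2}$ (consistent with $\mathcal I_n\to\sqrt{2\pi}$), and Laplace's method — after deforming to the steepest-descent contour to kill the odd part of $\Phi$, and after bounding the discarded range $|\theta|>T_n$ by a local-central-limit-type estimate $|f(\e^{i\psi}t)/f(t)|\le \e^{-c/|\log t|}$ for $\psi$ bounded away from $0$, which makes that contribution exponentially small — yields
\[
\mathcal I_n=\frac{2\sqrt\pi\,c_n}{\sqrt{1+2c_n^2}}\Big(1-\sum_{\ell=1}^{N}\frac{D_\ell}{(1+2c_n^2)^\ell}+O\big((1+2c_n^2)^{-(N+1)}\big)\Big),
\]
where $D_\ell$ is the $v^{2\ell}$–Taylor coefficient of $\Phi$ weighted by the Gaussian moments $\int v^{2\ell}\e^{-\frac14(1+2c_n^2)v^2}dv$; expanding $\Phi$ and applying the duplication formula for the Gamma function turns this into the finite sum \eqref{defDl}, from which $0<D_\ell<2\cdot3^{\ell-1}$ follows by a direct estimate of that sum.

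Finally I would multiply the two expansions: the factor $|\log t_n|^{3/2}\e^{c_n^2}/\big((2\pi)^{3/2}c_n\big)$ times $\mathcal I_n$ gives $\e^{c_n^2}|\log t_n|^{3/2}/\big(\sqrt2\,\pi\sqrt{1+2c_n^2}\big)\,(1-\sum_{\ell\le N}D_\ell(1+2c_n^2)^{-\ell}+\cdots)$, which is \eqref{asymexp} once $\e^{c_n^2}$ is replaced by $\e^{r_n}$; this replacement is legitimate since $|c_n^2-r_n|\le Cn\,\e^{-2\pi\sqrt{24n-1}}$ by \eqref{approxcn} (Lemma~\ref{lemcn}), so $\e^{c_n^2-r_n}=1+O(n\,\e^{-2\pi\sqrt{24n-1}})$, a relative error $o(n^{-M/2})$ for every $M$; absorbing it, the collected exponentially small errors, and the $O((1+2c_n^2)^{-(N+1)})=O(n^{-(N+1)})$ Laplace remainder into $\widetilde{\mathcal R}_{N+1}$ gives $n^{N/2}\,\widetilde{\mathcal R}_{N+1}\to0$ (and, using \eqref{approxlogtn}, passing further to the fully explicit form of the abstract). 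The main obstacle is the third paragraph: carrying out the change of variables $v=\theta(c_n^2-ic_n\theta)^{-1/2}$ correctly on the complex contour, justifying the Laplace expansion uniformly in $N$ with the stated remainder, and — above all — matching the resulting coefficients to the clean combinatorial closed form \eqref{defDl}; the resummation that converts an a priori expansion in powers of $1/c_n$ into one in powers of $1/(1+2c_n^2)$ with precisely those coefficients is where the real work lies.
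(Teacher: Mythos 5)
Your overall architecture coincides with the paper's: start from the exact Fourier identity \eqref{pn}, extract the exponential factor from the modular expansion of $\log f$ combined with the defining relation $\kappa_1(t_n)=n$, and expand the integral via the cumulants of $Z(t_n)$. Your treatment of the factor is correct and essentially equivalent to Corollary \ref{Corfactor} (you keep $\e^{c_n^2}$ where the paper writes $\e^{r_n+\beta_n}$; Lemma \ref{lemcn} makes these interchangeable up to errors far below $n^{-N/2}$). For the integral your route genuinely differs: the paper stays with the real variable $\theta$, writes the integrand as $\e^{-\theta^2\lambda_n/2}$ times $\e^{-\frac{\theta^2}{2}\lambda_n g_1(z)+g_2(z)}$ with $z=i\theta/c_n$, Taylor-expands the second factor in $z$ to order $2N+1$, and integrates term by term against the Gaussian; that is exactly how the derivatives $\partial_z^{(2\ell)}\big(\e^{z/2}(1-z)^{\ell+1}\big)\big\vert_{z=0}$ of \eqref{defCl} — and hence the closed form \eqref{defDl} — arise. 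You instead propose the complex substitution $v=\theta(c_n^2-ic_n\theta)^{-1/2}$, which indeed turns the exponent into exactly $-\frac14(1+2c_n^2)v^2$, followed by steepest descent.

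The gap is precisely where you locate it, and it is not a detail: you never derive the coefficients. The theorem does not merely assert that an expansion in powers of $(1+2c_n^2)^{-1}$ exists; it asserts that the coefficients are the specific finite sums \eqref{defDl} and satisfy $0<D_\ell<2\cdot 3^{\ell-1}$. In your scheme this requires inverting the substitution (roughly $\theta(v)=\tfrac{c_nv}{2}(-iv+\sqrt{4-v^2})$), assembling the Jacobian, the factor $\sqrt{1-i\theta/c_n}$ and the phase $\e^{i\theta/2c_n}$ into $\Phi(v)$, justifying the contour deformation past the branch points $v=\pm 2$, extracting the even Taylor coefficients of $\Phi$, and proving they reproduce \eqref{defDl} together with the positivity and the bound $2\cdot 3^{\ell-1}$ — none of which is carried out (the paper's proof of the bound alone is a nontrivial analysis of the alternating sums $J_\ell$). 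Two further points need repair. First, you resum the full cumulant series ``up to exponentially small terms,'' but the correction $E_j(t)$ to the $j$-th cumulant carries a constant $C_j$ with uncontrolled dependence on $j$ (see \eqref{Ejle}); the paper therefore truncates at a finite order $M$ and bounds the Taylor remainder via Corollary \ref{derKZ} rather than summing all the $E_j$, and your argument needs either uniform-in-$j$ bounds or the same truncation. Second, the claim $O\big((1+2c_n^2)^{-(N+1)}\big)=O(n^{-(N+1)})$ should read $O(n^{-(N+1)/2})$ since $c_n^2=O(\sqrt n)$; this is still sufficient for $n^{N/2}\,\widetilde{\mathcal R}_{N+1}\to 0$, but as written it is incorrect.
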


It is possible to clear up the expression \eqref{asymexp} by  
 replacing $1+2c_n^2$ and  $|\log t_n|$ by the  approximations that follow from 
 \eqref{approxcn} and \eqref{approxlogtn}.
  After verifying that the errors introduced by doing that are sufficiently 
 small to be included in the error terms of the expansion, 
 the following alternative expression  is obtained. 
 
 \begin{proposition} 
 \label{propalt}
 For any given $N>0$, $p(n)$ satisfies 
\begin{equation}
\label{asymexpa}
p(n)=
\frac{2\pi^2}{3\sqrt 3}\,
     \,\frac{{\rm e}^{r_n}}{(1+2\, r_n)^2}\,
     \big(\,1-\sum_{\ell= 1}^N\,\frac{D_\ell}{(1+2\,r_n)^\ell }
+{\mathcal R}_{N+1}\,\big),
\end{equation}
 where: 
 \begin{itemize}
 \item[$\circ$] The coefficients $D_\ell$ are those in \eqref{defDl}.
 \item[$\circ$] The error $\mathcal {R}_{N+1}$ satisfies
  $n^{N/2}\,\mathcal { R}_{N+1}\to 0$ as $n\to \infty$.
   \end{itemize}
\end{proposition}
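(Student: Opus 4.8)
The plan is to derive \eqref{asymexpa} from Theorem \ref{Thpp} by substituting the elementary approximations \eqref{approxcn} and \eqref{approxlogtn} into \eqref{asymexp} and tracking the errors. First I would rewrite the prefactor
\[
\frac{\e^{r_n}\,|\log t_n|^{3/2}}{\sqrt 2\,\pi\,\sqrt{1+2c_n^2}}
\]
by replacing $|\log t_n|$ with $\frac{2\pi^2}{3(1+2r_n)}$ and $1+2c_n^2$ with $1+2r_n$. Since $\big(\tfrac{2\pi^2}{3}\big)^{3/2}\big/(\sqrt 2\,\pi)=\tfrac{2\pi^2}{3\sqrt 3}$, the main term becomes exactly $\tfrac{2\pi^2}{3\sqrt3}\,\e^{r_n}/(1+2r_n)^2$, which is the claimed leading factor. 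One must check that the multiplicative error incurred in this replacement is of smaller order than $n^{-N/2}$: from \eqref{approxcn} and \eqref{approxlogtn} the relative errors are $O(n\,\e^{-2\pi\sqrt{24n-1}})$ and $O(\e^{-2\pi\sqrt{24n-1}})$ respectively, which decay faster than any power of $n$, so they are absorbed into $\mathcal R_{N+1}$ without difficulty. (Note $1+2r_n\asymp\sqrt n$, so dividing such exponentially small quantities by bounded powers of $1+2r_n$ changes nothing.)

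Next I would handle the series inside the parentheses. For each fixed $\ell\le N$ I would compare $\frac{D_\ell}{(1+2c_n^2)^\ell}$ with $\frac{D_\ell}{(1+2r_n)^\ell}$. Using \eqref{approxcn} together with $1+2c_n^2\asymp 1+2r_n\asymp\sqrt n$ and the mean value theorem applied to $x\mapsto x^{-\ell}$, the difference is $O\big(n^{-\ell/2-1/2}\cdot n\,\e^{-2\pi\sqrt{24n-1}}\big)$, again exponentially small and hence harmless. The telescoping of these $N$ bounds, together with the bound $0<D_\ell<2\cdot 3^{\ell-1}$ (which controls the constants uniformly in $\ell\le N$), shows
\[
1-\sum_{\ell=1}^N\frac{D_\ell}{(1+2c_n^2)^\ell}+\widetilde{\mathcal R}_{N+1}
=1-\sum_{\ell=1}^N\frac{D_\ell}{(1+2r_n)^\ell}+\mathcal R_{N+1},
\]
where $\mathcal R_{N+1}$ collects $\widetilde{\mathcal R}_{N+1}$, the exponentially small discrepancies from the series, and the exponentially small relative error from the prefactor (the last of these multiplies the whole bracket, which is $1+o(1)$, so it stays $o(n^{-N/2})$). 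Since $n^{N/2}\widetilde{\mathcal R}_{N+1}\to 0$ by Theorem \ref{Thpp} and all the other contributions are $O(n^{C}\e^{-c\sqrt n})=o(n^{-N/2})$, we get $n^{N/2}\mathcal R_{N+1}\to 0$, as required.

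The only genuinely delicate point is bookkeeping: one must make sure that when the exponentially small relative error $\delta_n$ in the prefactor is pulled through, it multiplies $1-\sum_\ell D_\ell/(1+2c_n^2)^\ell+\widetilde{\mathcal R}_{N+1}$, which is bounded (indeed $\to 1$), rather than something that grows; and symmetrically that replacing $1+2c_n^2$ by $1+2r_n$ inside the series does not interact badly with the $\ell=N$ term, whose size is already $\asymp n^{-N/2}$. Because every correction introduced is $O(n^{A}\e^{-2\pi\sqrt{24n-1}})$ for some fixed power $A=A(N)$, and this is $o(n^{-N/2})$ for every $N$, there is ample room; no cancellation or sharp estimate is needed. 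I expect this error-absorption step to be the main (though routine) obstacle, essentially a matter of organizing the estimates so that the final remainder is manifestly $o(n^{-N/2})$ while still displaying the coefficients $D_\ell$ unchanged. Finally I would remark that \eqref{asymexpa} is the promised ``simplified'' form, now entirely explicit in $n$ through the rational-in-$\sqrt n$ quantity $r_n$ of \eqref{defrn}.
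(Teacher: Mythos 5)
Your proposal is correct and follows essentially the same route as the paper: the paper likewise bounds the difference of the two sums (its $\Lambda_N$, via a geometric-ratio argument where you use the mean value theorem, both controlled by the bound $D_\ell<2\cdot 3^{\ell-1}$ and \eqref{estdn}) and the difference of the two prefactors (its $\Gamma_n$, via \eqref{ltnrn} and the exact relation $1+2c_n^2=\tfrac{2\pi^2}{3|\log t_n|}(1+\rho_n)$), then absorbs both exponentially small discrepancies, together with $\widetilde{\mathcal R}_{N+1}$, into $\mathcal R_{N+1}$. The only nitpick is that what you call the ``relative'' errors $O(n\,\e^{-2\pi\sqrt{24n-1}})$ and $O(\e^{-2\pi\sqrt{24n-1}})$ are really the absolute ones, but since everything in sight is super-polynomially small this does not affect the argument.
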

The expansion \eqref{asymexpa} is indeed less precise than that in Theorem \ref{Thpp}, but
its terms are very simple to implement. We report in Table \ref{table}  a comparison  of the 
 approximate  values for  $p(n)$ obtained by computing the sum in   \eqref{asymexpa} 
 with  $N=17$ for several values of $n$. The results appear reasonably accurate
 even for the  small values of $n$ that were considered. 
 
 Both statements are proved in Section \ref{finale}, after 
 developing asymptotic formulae for each of the terms in \eqref{pn}. 
 
In Section  \ref{aexps} we consider the terms in the factor. 
The behaviour of 
$f(t)$ as $t\uparrow 1$ is derived from a functional formula, which  by means of a
 recurrence permits also to 
deduce simple  formulae for the first two cumulants that approximate
 them very accurately as $t\uparrow 1$.   Those in turn render 
 very good approximations for the sequence  $t_n$. 
 That analysis turns out the exponential term in \eqref{asymexp}. 
 
 In Section \ref{tit} we obtain an asymptotic series  expansion for the integral term in \eqref{pn}, 
 which produces the  sum in the expansion of $p(n)$. The proof is based on
 an expansion of the integrand
 in terms of the cumulants of the random variables.
 In the Appendix, we obtain functional formulae for all those cumulants, and
  deduce from them  their precise asymptotics as $t\uparrow 1$, 
  which in turn permits to control the errors in the expansion. We derive  
 as well  some properties  of the cumulant generating function needed in the analysis.
Explicit formulae for all the cumulants of $X(t)$, $t\in [0,1)$ are also obtained, 
 similar to those 
 for the first two cumulants in  \eqref{mt} and \eqref{sigmat} and  whose numerators are given in terms of Eulerian polynomials.

\section {Asymptotic formulae for the terms in the factor}
\label{aexps}
In this section, we obtain  expressions for  the terms in the factor in \eqref{pn}, 
suitable for determining their  behaviour for large $n$, which corresponds to   
$t_n$ close to $1$. The asymptotics of $f(t)$ as $t\uparrow 1$ is derived  from the functional formula
for $\log f(t)$ stated in the next result.   
\begin{lemma}
 \label{alogf}
 For $t\in (0,1)$, 
 \begin{align}
 \nn
\log f(t)\,=&\,\frac{\pi^2}{6|\log t|}-\log(\sqrt{2\pi})-\frac{1}{24}|\log t|+
\frac 12 \log\,|\log t|
\\
\label{logft1}
&+\log f({\rm e}^{-\frac{4\pi^2}{|\log t|}}).
\end{align}
 \end{lemma}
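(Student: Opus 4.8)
The plan is to derive the functional equation from the classical modular transformation of the Dedekind eta function, recast in terms of $f(t)=\prod_{j\ge1}(1-t^j)^{-1}$. Writing $t=\e^{-s}$ with $s=|\log t|>0$, one has $f(t)^{-1}=\prod_{j\ge1}(1-\e^{-js})$, which is $\e^{s/24}\,\eta(is/2\pi)$ up to the standard normalization; the modular relation $\eta(-1/\tau)=\sqrt{-i\tau}\,\eta(\tau)$ applied at $\tau=is/2\pi$ sends $s\mapsto 4\pi^2/s$. Unwinding the normalization turns this into the stated identity relating $\log f(\e^{-s})$ and $\log f(\e^{-4\pi^2/s})$. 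Since the paper advertises an ``essentially real variable analysis,'' however, I expect the authors to prefer a self-contained derivation, so I will sketch that route instead.

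First I would take logarithms and expand: for $t\in(0,1)$,
\[
\log f(t)=-\sum_{j\ge1}\log(1-t^j)=\sum_{j\ge1}\sum_{k\ge1}\frac{t^{jk}}{k}
=\sum_{k\ge1}\frac1k\,\frac{t^k}{1-t^k}.
\]
Next, with $s=|\log t|$ so that $t^k=\e^{-ks}$, I would apply the Mellin transform: using $\frac{1}{\e^{u}-1}=\sum_{m\ge0}\e^{-(m+1)u}$ and $\int_0^\infty x^{z-1}\e^{-ax}\,dx=\Gamma(z)a^{-z}$, one gets for $\mathrm{Re}\,z>1$ a representation of $\log f(\e^{-s})$ as $\frac{1}{2\pi i}\int_{(c)}\Gamma(z)\zeta(z)\zeta(z+1)s^{-z}\,dz$ with $c>1$. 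Then I would shift the contour to the left, picking up residues: the pole of $\zeta(z)$ at $z=1$ gives the term $\frac{\pi^2}{6s}=\frac{\pi^2}{6|\log t|}$ (using $\zeta(2)=\pi^2/6$); the pole of $\Gamma(z)\zeta(z+1)$ at $z=0$, where $\zeta(z+1)\sim 1/z$, produces a double pole whose residue yields $-\tfrac12\log s+(\text{constant})$, the constant being $-\log\sqrt{2\pi}$ after evaluating $\zeta'(0)=-\tfrac12\log(2\pi)$; and the pole at $z=-1$ contributes $\Gamma(-1)$-type residue $-\tfrac{s}{24}$ via $\zeta(-1)=-\tfrac1{12}$, $\zeta(0)=-\tfrac12$. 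What remains after the shift is an integral that, by the functional equation of $\zeta$ (or equivalently by Poisson summation applied directly to $\sum_k \frac{\e^{-ks}}{1-\e^{-ks}}=\sum_{j,k}\e^{-jks}$), is recognized as $\log f(\e^{-4\pi^2/s})$, since reflecting $z\mapsto -z$ and using $\zeta(1-z)=2(2\pi)^{-z}\cos(\pi z/2)\Gamma(z)\zeta(z)$ converts the Dirichlet series in $s$ into the same Dirichlet series in $4\pi^2/s$.

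The main obstacle is the bookkeeping of the residue at $z=0$: it is a double pole (from the simple pole of $\Gamma$ meeting the simple pole of $\zeta(z+1)$), so extracting both the $\log s$ coefficient and the additive constant requires the Laurent expansions $\Gamma(z)=\tfrac1z-\gamma+O(z)$, $\zeta(z+1)=\tfrac1z+\gamma+O(z)$, and $s^{-z}=1-z\log s+O(z^2)$, and one must see the $\gamma$'s cancel so that only $-\log\sqrt{2\pi}$ survives alongside $+\tfrac12\log|\log t|$. A secondary technical point is justifying the contour shift (growth of $\Gamma$ on vertical lines is controlled by Stirling, and $\zeta$ has at most polynomial growth, so the horizontal segments vanish), and verifying that the shifted integral is genuinely $\log f(\e^{-4\pi^2/s})$ rather than that expression plus a further error — this is exactly where the $\zeta$ functional equation does the work, so I would present that identification carefully. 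Everything else — the geometric-series manipulations, the value $\zeta(2)=\pi^2/6$, and the elementary asymptotic consequences — is routine and I would leave it to the reader.
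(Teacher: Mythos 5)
Your proposal is correct, and in fact you describe two routes. The first one --- the modular transformation of the Dedekind eta function --- is essentially what the paper does: the authors simply quote the equivalent identity $\e^{(\alpha-\beta)/12}=(\alpha/\beta)^{1/4}\prod_{j\ge1}\frac{1-\e^{-2\alpha j}}{1-\e^{-2\beta j}}$ for $\alpha\beta=\pi^2$ from Berndt's edition of Ramanujan's notebooks, substitute $\e^{-2\beta}=t$, $\alpha=2\pi^2/|\log t|$, take logarithms and rearrange; so your guess that they would prefer a self-contained derivation turns out to be wrong, and their proof is a two-line citation-and-substitution. The Mellin route you actually develop is a genuinely different, self-contained proof of the same functional equation, and it checks out: the representation $\log f(\e^{-s})=\frac{1}{2\pi i}\int_{(c)}\Gamma(z)\zeta(z)\zeta(z+1)s^{-z}\,dz$ is valid for $c>1$, the residues at $z=1$, $z=0$, $z=-1$ produce exactly the four explicit terms of \eqref{logft1}, and the reflected integral is identified with $\log f(\e^{-4\pi^2/s})$ via $\zeta(1-z)=2(2\pi)^{-z}\cos(\pi z/2)\Gamma(z)\zeta(z)$ together with the reflection formula for $\Gamma$. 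What each approach buys: the paper's argument is short but outsources the real content to an external identity (itself usually proved by theta/Mellin methods), whereas yours is longer but self-contained and makes transparent where each term of \eqref{logft1} comes from. One small slip: in your middle paragraph you state that the double pole at $z=0$ contributes $-\tfrac12\log s+\text{const}$; the correct contribution is $+\tfrac12\log s-\log\sqrt{2\pi}$ (with $s=|\log t|$), consistent with the lemma and with your own later statement that $+\tfrac12\log|\log t|$ survives --- worth fixing, since the Laurent bookkeeping at $z=0$ is, as you say, the delicate step.
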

\begin{proof} The following identity for $\alpha, \beta >0$, $\alpha \beta=\pi^2$
 is Corollary (ii) in \cite[p.256]{RNII}:
 \[\e^{(\alpha-\beta)/12}=\big(\frac{\alpha}{\beta}\big)^{\frac 14}
 \prod_{j=1}^\infty\frac{1-\e^{-2\alpha\,j}}{1-\e^{-2\beta\,j}}.
 \] 
The substitution $\e^{-2\beta}=t$ and correspondingly  $\alpha =2\pi^2/|\log t|$ in the  above  product  yields from \eqref{eulerf}  
 an expression  for $f(t)/f({\rm e}^{-\frac{4\pi^2}{|\log t|}})$, that after taking logarithm
and reordering is exactly 
\eqref{alogf}.
\end{proof}

Let us observe now that, as $t\uparrow 1$, the last term in \eqref{alogf} goes to zero.
 Moreover,  starting from \eqref{eulerf} we can estimate for $\lambda \in (0,1)$,
 \begin{equation}
 \nonumber
 \partial_\lambda\big(\log f(\lambda)\big) = \sum_{j=1}^\infty j\,
 \frac{\lambda^{j-1}}{1-\lambda^j}\le \frac{1}{1-\lambda}
 \sum_{j=1}^\infty j\,
 \lambda^{j-1}=\frac{1}{(1-\lambda)^3}
 \end{equation}
 to obtain, as an application of  the mean value theorem that for some $ \xi\in (0,\lambda)$
 \begin{equation}
 \nonumber
 \log f(\lambda)=\lambda\, \partial_\lambda
 \big(\log f(\lambda)\big\vert_{\lambda=\xi}\big)\le \frac{\lambda}{(1-\lambda)^3}. 
 \end{equation}
 In particular, if  we take any $b>1$ and  $t\in (\e^{-\frac{4\pi^2}{\log b}},1)$ then 
  $\e^{-\frac{4\pi^2}{|\log t|}}<\frac 1b$, and we conclude that 
 \begin{equation}
\label{eE0}
0<E_0(t):=\log f(\e^{-\frac{4\pi^2}{|\log t|}})
\le (\frac {1}{1-\frac 1b})^3\, \e^{-\frac{4\pi^2}{|\log t|}}.
\end{equation}
For instance, taking $b=2$ we obtain that, for any $t>\e^{-4\pi^2}\approx 10^{-17}$, 
\begin{equation}
\label{E0le}
0<E_0(t)< 8\,\e^{-\frac{4\pi^2}{|\log t|}}.
\end{equation}
We may thus regard \eqref{alogf} as an asymptotic formula for $\log f(t)$ 
as $t\uparrow 1$, which  we  write in the form 
\begin{equation}
\label{lfa} 
\log f(t)=\frac{\pi^2}{6|\log t|}-\log(\sqrt{2\pi})-\frac{1}{24}|\log t|+
\frac 12 \log\,|\log t|+E_0(t),
\end{equation} 
with $E_0(t)$ satisfying \eqref{eE0}. We also have from  the above estimations  that 
 $E_0(t)\asymp\e^{-\frac{4\pi^2}{|\log t|}}$. The symbol $\asymp$
  denotes asymptotic logarithmic order, as formalized in the following definition.
  We find this  kind of asymptotic equivalence  along the way in the sequel. 

\begin{Definition}
\label{defasym}
We say that a function $E(t)$ defined for $t$ in the interval $(0,1)$
is logarithmically equivalent to ${\rm e}^{-\frac{A}{|\log t|}}$ for
 some $A>0$ as $t\uparrow 1$
 and denote 
\[
E(t)\asymp {\rm e}^{-\frac{A}{|\log t|}}
\]
if
\[
\underset{t\uparrow 1}{\lim}|\log t|\big( \log |E(t)|\big)=-A. 
\] 
\end{Definition}

 We proceed to 
deduce formulae for   the mean $\kappa_1(t)$ and variance
 $\kappa_2(t)$ suitable  for the analysis  as $t\uparrow 1$, 
 and used in Corollary \ref{cortn}  to derive  precise  estimates for $t_n$
 as $n\to \infty$ from \eqref{deftn}. 
 We recall that  $\kappa_1(t)$ and $\kappa_2(t)$ are given by the series
 \eqref{mt} and \eqref{sigmat}, as already   computed in the previous section,
\begin{equation}
\label{cum12}
    \kappa_1(t)= \sum _{j\ge 1} j\,\frac{t^j}{1-t^j},\qquad \qquad 
    \kappa_2(t)\,=
    \sum _{j\ge 1} j^2\,\frac{t^j}{(1-t^j)^2}.
\end{equation}
\begin{lemma}
\label{cumulants}
The mean $ \kappa_1(t)$ and variance $\kappa_2(t)$ satisfy the following functional equations,
\begin{align}
\label{m(t)}
 \kappa_1(t)=&\,\frac{\pi ^2}{6|\log t|^2}\,-\,\frac {1}{2\,|\log t|}\,
+\frac{1}{24}-\frac{4\pi^2}{|\log t|^2}\, \kappa_1({\rm e}^{-\frac{4\pi^2}{|\log t|}}),
\\
\label{sigma2}
 \kappa_2(t)=&\,\frac{\pi^2}{3|\log t|^3}-\frac{1}{2|\log t|^2}+
\big(\frac{4\pi^2}{|\log t|^2}\big)^2
 \kappa_2({\rm e}^{-\frac{4\pi^2}{|\log t|}})
\\ \nn
&-\frac{8\pi^2}{|\log t|^3}\, \kappa_1({\rm e}^{-\frac{4\pi^2}{|\log t|}}).&
\end{align}
\end{lemma}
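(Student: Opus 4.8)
The plan is to derive the two functional equations by differentiating the functional formula for $\log f(t)$ in Lemma \ref{alogf}, exploiting the fact that $\kappa_1(t)$ and $\kappa_2(t)$ are essentially the first two logarithmic derivatives of $f$ at $t$. More precisely, from \eqref{eulerf} one has $\partial_\lambda\log f(\lambda)=\sum_{j\ge1} j\,\lambda^{j-1}/(1-\lambda^j)$, so that $\lambda\,\partial_\lambda\log f(\lambda)=\kappa_1(\lambda)$; differentiating once more and multiplying by $\lambda$ produces $\kappa_2(\lambda)$ (the variance is $\lambda\partial_\lambda(\lambda\partial_\lambda\log f(\lambda))$, which matches \eqref{sigmat} after a short computation). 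In other words, if we set $u=|\log t|=-\log t$ and note $\lambda\partial_\lambda=-\partial_u$, then $\kappa_1(t)=-\partial_u\log f$ and $\kappa_2(t)=\partial_u^2\log f$ evaluated at $t=\e^{-u}$.

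First I would rewrite \eqref{logft1} using the variable $u=|\log t|$, so that the right-hand side becomes $\frac{\pi^2}{6u}-\log\sqrt{2\pi}-\frac{u}{24}+\frac12\log u+\log f(\e^{-4\pi^2/u})$. Then I apply $-\partial_u$ to both sides to get $\kappa_1(t)$: the first three explicit terms give $\frac{\pi^2}{6u^2}+\frac{1}{24}-\frac{1}{2u}$, matching the first three terms of \eqref{m(t)}. For the last term, writing $g(u):=\log f(\e^{-4\pi^2/u})$ and $v:=4\pi^2/u$, the chain rule gives $-\partial_u g=-(\partial_v\log f(\e^{-v}))\cdot\frac{dv}{du}=\bigl(\kappa_1(\e^{-v})\bigr)\cdot\bigl(-\frac{dv}{du}\bigr)$; since $v=4\pi^2/u$ we have $dv/du=-4\pi^2/u^2$, producing exactly $-\frac{4\pi^2}{u^2}\,\kappa_1(\e^{-4\pi^2/u})$, i.e. the last term of \eqref{m(t)}. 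I should be careful here with the sign: $-\partial_v\log f(\e^{-v})=\kappa_1(\e^{-v})$ because increasing $v$ decreases the argument, so the bookkeeping of signs is the one point that needs attention, though it is elementary.

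For \eqref{sigma2} I differentiate once more, applying $\partial_u^2$ to \eqref{logft1} in the variable $u$. The explicit terms $\frac{\pi^2}{6u}$ and $\frac12\log u$ contribute $\frac{\pi^2}{3u^3}$ and $-\frac{1}{2u^2}$ respectively (the linear term $-u/24$ and the constant drop out), matching the first two terms of \eqref{sigma2}. The remaining term is $\partial_u^2 g(u)$ with $g(u)=\log f(\e^{-4\pi^2/u})$; here I use $\partial_u g=-\frac{4\pi^2}{u^2}\kappa_1(\e^{-4\pi^2/u})$ from the previous step and differentiate again, using that $\partial_u\kappa_1(\e^{-4\pi^2/u})=\kappa_2(\e^{-4\pi^2/u})\cdot\frac{4\pi^2}{u^2}$ (again by the chain rule, since $\partial_v\kappa_1(\e^{-v})=-\kappa_2(\e^{-v})$ and $dv/du=-4\pi^2/u^2$). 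Collecting terms: the derivative hitting the prefactor $-4\pi^2/u^2$ gives $\frac{8\pi^2}{u^3}\kappa_1(\e^{-4\pi^2/u})$ and the derivative hitting $\kappa_1$ gives $\bigl(\frac{4\pi^2}{u^2}\bigr)^2\kappa_2(\e^{-4\pi^2/u})$, which together reproduce the last two terms of \eqref{sigma2}. The only real obstacle is keeping the nested chain-rule signs and factors straight, and checking convergence/differentiability termwise — but the series \eqref{cum12} converge locally uniformly on $(0,1)$ together with their derivatives, so termwise differentiation of \eqref{eulerf} and of the product formula is justified, and the identity in Lemma \ref{alogf} is an equality of real-analytic functions on $(0,1)$, hence may be differentiated freely.
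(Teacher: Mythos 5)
Your proposal is correct and is essentially the paper's own proof: the paper applies the operator $t\partial_t$ to \eqref{logft1}, using the observation that $t\,\partial_t\,H(|\log t|)=-H'(|\log t|)$ together with the recurrence \eqref{rcum1j}, which is exactly your substitution $u=|\log t|$, $t\partial_t=-\partial_u$. One bookkeeping slip should be fixed before this becomes a clean proof: with $g(u)=\log f(\e^{-4\pi^2/u})$ the correct identity is $-\partial_u g=-\frac{4\pi^2}{u^2}\,\kappa_1(\e^{-4\pi^2/u})$, i.e.\ $\partial_u g=+\frac{4\pi^2}{u^2}\,\kappa_1(\e^{-4\pi^2/u})$; starting from $\partial_u g=-\frac{4\pi^2}{u^2}\kappa_1$ as written and then ``collecting terms'' would yield $\partial_u^2 g=+\frac{8\pi^2}{u^3}\kappa_1-\big(\frac{4\pi^2}{u^2}\big)^2\kappa_2$, with both signs opposite to \eqref{sigma2}. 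With that single sign corrected, the same computation gives $\partial_u^2 g=-\frac{8\pi^2}{u^3}\,\kappa_1(\e^{-4\pi^2/u})+\big(\frac{4\pi^2}{u^2}\big)^2\kappa_2(\e^{-4\pi^2/u})$, as required.
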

\begin{remark}
The first series in  \eqref{cum12} can be computed explicitly
   in the case $t=\e^{-2\pi}$, as follows from  formula (8.3), page 255 of B.C. Berndt's edition of 
   S. Ramanujan's notebooks  \cite{RNII}:
\begin{equation}
\nn
\sum_{j\ge1} j\,\frac {\e^{-2\pi j}}{(1-\e^{-2\pi j})}=\frac{1}{24}-\frac{1}{8\pi}. 
\end{equation}
Indeed, the identity \eqref{m(t)} follows  from that formula  as well. 
An account of series like those in \eqref{cum12}, that appear also in other contexts, 
can be found in the references therein. Some examples are mentioned in \cite{eg}. 
A closed form  of $\kappa_1(t)$   for other values of $t$ does not appear to be known.
\end{remark}

The formulae in the previous lemma yield precise asymptotics for 
$\kappa_1(t)$ and $\kappa_2(t)$  as $t\uparrow 1$, as  we state next. 
\begin{corollary}
\label{cormsigma}
\begin{align}
\label{mta}
\kappa_1(t)=&\,\frac{\pi ^2}{6|\log t|^2}\,-\,\frac {1}{2\,|\log t|}\,
\,+\frac{1}{24}+E_1(t),
\\
\label{sigma2a}
\kappa_2(t)= &\,\frac{\pi^2}{3|\log t|^3}-\frac{1}{2|\log t|^2}\,+E_2(t),
\end{align}
where
\[
E_j(t)\asymp {\rm e}^{-\frac{4\pi^2}{|\log t|}}\quad j=1,2 \qquad \mbox{as } t\uparrow 1.
\]
\end{corollary}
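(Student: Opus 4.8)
The Corollary is essentially a reformulation of Lemma \ref{cumulants} once the behaviour of $\kappa_1$ and $\kappa_2$ near the origin is understood. The plan is first to read off the error terms by matching \eqref{mta} and \eqref{sigma2a} against \eqref{m(t)} and \eqref{sigma2}: writing $s=s(t):={\rm e}^{-\frac{4\pi^2}{|\log t|}}$, which tends to $0$ as $t\uparrow1$, Lemma \ref{cumulants} forces
\[
E_1(t)=-\frac{4\pi^2}{|\log t|^2}\,\kappa_1\big(s(t)\big),\qquad
E_2(t)=\Big(\frac{4\pi^2}{|\log t|^2}\Big)^2\kappa_2\big(s(t)\big)-\frac{8\pi^2}{|\log t|^3}\,\kappa_1\big(s(t)\big).
\]
The whole task then reduces to showing that each of these is $\asymp{\rm e}^{-\frac{4\pi^2}{|\log t|}}$ in the sense of Definition \ref{defasym}, that is, that $|\log t|\,\log|E_j(t)|\to-4\pi^2$ as $t\uparrow1$.

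The second step is to record the behaviour of the cumulants at $0$. From the series \eqref{cum12}, isolating the $j=1$ term and using $1-s^j\ge1-s$ to bound the remaining terms by $\frac{s^2(2-s)}{(1-s)^3}=O(s^2)$ for $\kappa_1$ (and similarly an $O(s^2)$ tail for $\kappa_2$), one obtains $\kappa_1(s)=s+O(s^2)$ and $\kappa_2(s)=s+O(s^2)$ as $s\downarrow0$. In particular $\kappa_i(s)/s\to1$, and there are constants $0<c\le C$ with $c\,s\le\kappa_i(s)\le C\,s$ for all small $s>0$ and $i=1,2$.

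The final step is to substitute $s=s(t)$ and take logarithms. For $E_1$ this gives $\log|E_1(t)|=\log\frac{4\pi^2}{|\log t|^2}-\frac{4\pi^2}{|\log t|}+o(1)$, hence
\[
|\log t|\,\log|E_1(t)|=|\log t|\,\log\frac{4\pi^2}{|\log t|^2}-4\pi^2+o(|\log t|)\longrightarrow-4\pi^2,
\]
using that $x\log(A/x^2)\to0$ as $x\downarrow0$; this is precisely $E_1(t)\asymp{\rm e}^{-\frac{4\pi^2}{|\log t|}}$. For $E_2$ the two summands carry the prefactors $|\log t|^{-4}$ and $|\log t|^{-3}$, so $E_2(t)=\frac{16\pi^4}{|\log t|^4}\,s(t)\big(1-\frac{|\log t|}{2\pi^2}+o(1)\big)$, whose bracket tends to $1$; thus $E_2(t)>0$ for $t$ near $1$ and the identical computation yields $|\log t|\,\log|E_2(t)|\to-4\pi^2$. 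The only mildly delicate point is this non-cancellation in $E_2$, and it is settled at once by the different powers of $|\log t|$ in front of $\kappa_2(s(t))$ and $\kappa_1(s(t))$; otherwise the argument is routine, the main care being to keep the lower bound $\kappa_i(s)\ge c\,s$ as well as the upper one, since Definition \ref{defasym} requires controlling $\log|E_j(t)|$ from below.
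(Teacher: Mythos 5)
Your proposal is correct and follows the same overall structure as the paper's proof: you identify $E_1$ and $E_2$ by matching the Corollary against Lemma \ref{cumulants}, reduce everything to showing that $\kappa_j(s)$ is of exact order $s$ as $s\downarrow 0$, and then pass to the logarithmic limit required by Definition \ref{defasym}. The one place where you genuinely diverge is the upper bound on $\kappa_j(s)$: the paper bounds $\partial_\lambda\kappa_1(\lambda)$ by $2/(1-\lambda)^5$ using the series \eqref{cum12a} and applies the mean value theorem (obtaining the explicit constant in \eqref{k1num}, which is reused later in the proof of Corollary \ref{cortn}), whereas you isolate the first term of each series in \eqref{cum12} and sum the tail geometrically, which is more elementary and gives the sharper statement $\kappa_j(s)=s+O(s^2)$. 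You also make explicit two points the paper leaves implicit: the verification that the polynomial prefactors $|\log t|^{-k}$ do not affect the logarithmic order (via $x\log(A/x^2)\to 0$), and, more importantly, the absence of cancellation between the two terms of $E_2(t)$ --- since Definition \ref{defasym} requires a lower bound on $|E_2|$, one must check that the subtracted term $\frac{8\pi^2}{|\log t|^3}\kappa_1(s(t))$ is of strictly lower order than $\bigl(\frac{4\pi^2}{|\log t|^2}\bigr)^2\kappa_2(s(t))$, which your factorization settles cleanly. Both proofs are valid; yours is slightly more self-contained on the asymptotic side, while the paper's version produces the quantitative constants it needs downstream.
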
 
Before proving  Lemma \ref{cumulants} and the above corollary, 
let us recall  the notion of cumulants of a random variable and some of their properties.  
  
Given a random variable $X$ with characteristic function 
$\varphi^{}_X(\theta)=\bb E\,\e^{i\theta\,X} $, 
its cumulants (or semi-invariants) are the coefficients $\kappa_j$
in the Taylor  series expansion 
of $K_X(\theta):=\log \varphi^{}_X(\theta)$ (provided such expansion exists),
\begin{equation}
\label{gcum}
K_X(\theta)=\sum_{j\ge1}\kappa_j\frac{(i\theta)^j}{j!}.
\end{equation}
We consider here and in the sequel the principal value of the logarithm.  
Therefore, 
$
i^j\kappa_j=\partial_\theta^{(j)}\,K_X(\theta)\vert _{\theta=0}
$ 
and $K_X$ is referred to as  the cumulant generating function of the random variable $X$.
The first cumulant $\kappa_1$ results to be  the mean, and the second one $\kappa_2$  the
variance, when those exist. We refer to \cite{Sh} for further properties of $K$.

In the case of the random variables $X(t)$, $t\in (0,1)$ considered in the introduction,
from \eqref{deffi}
\[ 
K_{X(t)}(\theta)=\log \frac{f(\e^{i\theta}t)}{f(t)}, 
\] 
and it is not difficult to verify from  this last expression that 
its cumulants, that we denote by $\kappa_j(t)$, are finite 
 and  can be computed from  the simple recursion  formulae 
\begin{equation}
\label{rcum1j}
\kappa_1(t)=t\partial_t \log f(t), \quad \kappa_{j+1}(t)=t\partial_t \kappa_{j}(t), \quad j\ge 1.
\end{equation}
Starting from 
\begin{equation}
\nn 
\log f(t)= \sum _{\ell\ge 1}\log \frac{1}{1-t^\ell}=\sum _{\ell\ge 1}\frac{t^\ell}{\ell\,(1-t^\ell)},
\end{equation}
we recover from the first identity  the  series in \eqref{cum12}
 for $\kappa_1(t)$ and $\kappa_2(t)$.  Another couple of forms for those series 
 follows from the second identity; we recall them  here for further reference:
 \begin{equation}
 \label{cum12a}
 \kappa_1(t)= \sum _{\ell\ge 1}\frac{t^\ell}{(1-t^\ell)^2}, \qquad \qquad 
    \kappa_2(t)\,=
    \sum _{\ell\ge 1} \ell\,\frac{(t^\ell+t^{2\ell})}{(1-t^\ell)^3}.
  \end{equation} 
 It is not difficult to conclude from \eqref{cum12} and  the recurrence 
\eqref{rcum1j} that,  besides being finite for each $ t\in [0,1)$,  the cumulants $\kappa_j(t)$, $j\ge 1$ 
satisfy  $\kappa_j(t)\uparrow \infty$ as $t\uparrow 1$. Explicit formulae for $j>2$  are computed in the Appendix. 

\begin{proof}[Proof of Lemma \ref{cumulants}]
Observe that  for any real function $H$ with derivative $H'$, 
if $t\in(0,1)$, 
 \begin{equation}
 \label{fH}
 t\,\partial_t\, H(|\log t|)=-H'(|\log t|),
 \end{equation} 
 whenever both sides are defined. 
 Apply next $t\,\partial_t$ to   \eqref{logft1} to obtain formula  \eqref{m(t)} directly from 
 \eqref{rcum1j}, and again 
 $t\,\partial_t$ to  \eqref{m(t)} to obtain \eqref{sigma2}.
\end{proof}

\begin{proof}[Proof of Corollary \ref{cormsigma}]
Note that from Lemma \ref{cumulants}, \eqref{mta}  and \eqref{sigma2a},  
\begin{align}
\label{defE1}
E_1(t)=&-\frac{4\pi^2}{|\log t|^2}\,\kappa_1(\e^{-\frac{4\pi^2}{|\log t|}})
\\
\label{defE2}
E_2(t)=&\,\big(\frac{4\pi^2}{|\log t|^2}\big)^2
 \kappa_2(\e^{-\frac{4\pi^2}{|\log t|}})
-\frac{8\pi^2}{|\log t|^3}\, \kappa_1(\e^{-\frac{4\pi^2}{|\log t|}}).
\end{align}
It is thus  enough to show that, given $T\in (0,1)$,   there are positive
  constants $c_j$ and $C_j$ such that 
if $t\in (T,1)$,
\begin{equation}
\label{cjcj}
c_j\,\e^{-\frac{4\pi^2}{|\log t|}}\le\kappa_j(\e^{-\frac{4\pi^2}{|\log t|}})
\le\, C_j\, \e^{-\frac{4\pi^2}{|\log t|}}\quad
j=1,2.
\end{equation}

The lower bounds follow by estimating the series  \eqref{cum12} by 
their first term, as all the  terms are positive, and the denominator
 is bounded away from zero if $t\in (T,1)$. 
 The upper bound 
follows from the mean value theorem as in the proof of \eqref{eE0}. 
Indeed, for $j=1$,  observe that, from \eqref{cum12a}, 
\[
\partial_{\lambda}\kappa_1(\lambda)=
\sum _{\ell=1}^\infty \ell\,\frac{\lambda^{\ell-1}\,(1+\lambda^\ell)}{(1-\lambda^\ell)^3}\le
\frac{2}{(1-\lambda)^3}\,\partial_{\lambda}
\big(\frac{\lambda}{1-\lambda}\big)=\frac{2}{(1-\lambda)^5},
\]  
and then, for any  $b>1$ and
  $t\in (\e^{-\frac{4\pi^2}{\log b}},1)$, 
\begin{equation}
\nn
\kappa_1(\e^{-\frac{4\pi^2}{|\log t|}})\le
 \frac{2}{(1-\frac 1b)^5}\,
\e^{-\frac{4\pi^2}{|\log t|}}.
\end{equation} 
In particular, if we choose $b=2$, we can see that for any $t>\e^{-4\pi^2}\approx10^{-17}$,
\begin{equation} 
\label{k1num}
\kappa_1(\e^{-\frac{4\pi^2}{|\log t|}})\le 64\, \e^{-\frac{4\pi^2}{|\log t|}}.
\end{equation}
The same procedure yields the upper bound in \eqref{cjcj} for $j=2$. 
 \end{proof}
 
\smallskip

We deduce next good approximations (as $n\to \infty$) to convenient multiples of  $|\log t_n|$ and its reciprocal, in terms of the rational function 
$r_n=\sqrt{\frac {2\pi^2}{3}\,(n-\frac{1}{24})+\frac 14}$ already defined in \eqref{defrn}.   
  Recall that $t_n$ was defined in
\eqref{deftn} as the solution to $\kappa_1(t_n)=n$, and it is therefore an increasing
sequence converging to 1 as $n\to \infty$. 

\smallskip

\begin{corollary}
\label{cortn}
The parameter $t_n$ introduced  in \eqref{deftn} satisfies 
\begin{equation}
\label{tnle}
{\rm e}^{-\pi/\sqrt{6\,(n-\frac {1}{24})}}<t_n<1.
\end{equation}
Moreover, there is a positive constant $C$ such that  
\begin{align}
\label{tna}
\big|\,\frac{\pi^2}{3|\log t_n|}\,-\,
(\frac 12\,+r_n)\,\big|&\le 
 C\sqrt n\,{\rm e}^{-2\,\pi\sqrt{24n-1}},
 \\ \label{ltnrn}
  \big|\,|\log t_n|\,-
 \,\frac{2\pi^2}{3\,\big(1+2\,r_n\,\big)}\,\big|
 &\le C\,{\rm e}^{-2\,\pi\sqrt{24n-1}}.  
\end{align}
\end{corollary}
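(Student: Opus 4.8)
My plan is to substitute the defining relation $\kappa_1(t_n)=n$ into the asymptotic formula \eqref{mta} and to solve the resulting near‑quadratic equation for $|\log t_n|$. Setting $x:=1/|\log t_n|$ and recalling from \eqref{defE1} that $E_1(t)=-\frac{4\pi^2}{|\log t|^2}\kappa_1(\e^{-4\pi^2/|\log t|})$ is strictly negative for $t\in(0,1)$, equation \eqref{mta} evaluated at $t_n$ reads
\[
\tfrac{\pi^2}{6}\,x^2-\tfrac12\,x-\bigl(n-\tfrac{1}{24}\bigr)=\eta_n,\qquad
\eta_n:=4\pi^2\,x^2\,\kappa_1(\e^{-4\pi^2 x})>0 .
\]
From here I would first read off \eqref{tnle}: since $\eta_n>0$ and $\tfrac12 x>0$, the identity forces $\tfrac{\pi^2}{6}x^2>n-\tfrac{1}{24}$, which rearranges to $|\log t_n|<\pi/\sqrt{6(n-1/24)}$, while $t_n<1$ is part of the definition of $t_n$. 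This bound in particular keeps $t_n>\e^{-4\pi^2}$ for every $n\ge1$, so that \eqref{k1num} is available and gives $0<\eta_n\le 256\pi^2\,x^2\,\e^{-4\pi^2 x}$.

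Next I would solve the displayed quadratic for its unique positive root $x$. Using $\tfrac14+\tfrac{2\pi^2}{3}(n-\tfrac{1}{24})=r_n^2$ this produces the exact identity
\[
\frac{\pi^2}{3|\log t_n|}=\frac{\pi^2}{3}\,x=\frac12+\sqrt{\,r_n^2+\tfrac{2\pi^2}{3}\,\eta_n\,},
\]
and rationalizing the difference with $\tfrac12+r_n$ yields
\[
0<\frac{\pi^2}{3|\log t_n|}-\Bigl(\frac12+r_n\Bigr)
=\frac{\tfrac{2\pi^2}{3}\,\eta_n}{\sqrt{r_n^2+\tfrac{2\pi^2}{3}\eta_n}+r_n}
\le\frac{\pi^2\,\eta_n}{3\,r_n}.
\]
The remaining task is to estimate $\eta_n$. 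The lower bound on $x$ together with the arithmetic identity $24\bigl(n-\tfrac{1}{24}\bigr)=24n-1$ gives $4\pi^2 x\ge 2\pi\sqrt{24n-1}$, hence $\e^{-4\pi^2 x}\le\e^{-2\pi\sqrt{24n-1}}$; and a crude a priori estimate — feeding $\eta_n\le 256\pi^2 x^2\e^{-4\pi^2 x}\le 256\pi^2\e^{-4\pi^2}x^2$ back into the quadratic — shows $x=O(\sqrt n)$. Combining, $\eta_n\le C\,n\,\e^{-2\pi\sqrt{24n-1}}$, and since $n/r_n=O(\sqrt n)$ this delivers \eqref{tna}.

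For \eqref{ltnrn} I would simply invert: writing $\frac{\pi^2}{3|\log t_n|}=\tfrac12+r_n+\delta_n$ with $|\delta_n|\le C\sqrt n\,\e^{-2\pi\sqrt{24n-1}}$ from \eqref{tna}, one gets $|\log t_n|=\frac{2\pi^2}{3(1+2r_n+2\delta_n)}$, and subtracting $\frac{2\pi^2}{3(1+2r_n)}$ leaves a quantity of size $O(|\delta_n|/r_n)=O(\e^{-2\pi\sqrt{24n-1}})$, again because $\sqrt n/r_n$ is bounded. Everything here is elementary; I expect the only genuine points of care to be (i) keeping the constant $C$ uniform over all $n$ for which the statement is asserted — which the a priori bound $t_n>\e^{-4\pi^2}$ handles, since it makes \eqref{k1num} applicable throughout — and (ii) the bookkeeping that converts the naturally occurring error $\e^{-4\pi^2/|\log t_n|}$, carrying polynomial prefactors of order $n$ (from $x^2$) and order $n^{-1/2}$ (from $1/r_n$), into the clean bounds $\sqrt n\,\e^{-2\pi\sqrt{24n-1}}$ and $\e^{-2\pi\sqrt{24n-1}}$ appearing in \eqref{tna} and \eqref{ltnrn}.
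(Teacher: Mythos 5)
Your argument is correct and follows essentially the same route as the paper: substitute the functional equation for $\kappa_1$ into $\kappa_1(t_n)=n$, get \eqref{tnle} by discarding the negative terms, solve the resulting quadratic in $1/|\log t_n|$ treating the exponentially small $\kappa_1({\rm e}^{-4\pi^2/|\log t_n|})$ term as a known positive quantity controlled by \eqref{k1num}, and then compare with $r_n$ (the paper folds that term into the leading coefficient as $1-24h_n$, you move it under the square root as $\eta_n$ -- algebraically equivalent). The only cosmetic bonus of your version is that the rationalization exhibits the sign of the difference in \eqref{tna}; the only cosmetic blemish is that the step ${\rm e}^{-4\pi^2 x}\le {\rm e}^{-4\pi^2}$ needs $x\ge 1$, which holds for $n\ge 2$ and is harmlessly absorbed into the constant for $n=1$.
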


\begin{proof} 
From \eqref{deftn} and  \eqref{m(t)} we know
\begin{equation}
\label{mt2}
\frac{\pi ^2}{6|\log t_n|^2}\,-\,\frac {1}{2\, |\log t_n|}\,+\,\frac{1}{24}-
\frac{4\pi^2}{|\log t_n|^2}\, \kappa_1(\e^{-\frac{4\pi^2}{|\log t_n|}})=n.
\end{equation}
Picking only the positive terms on the left hand  side we obtain 
\begin{equation}
\label{tngn}
\frac{\pi ^2}{6|\log t_n|^2}\,+\,\frac{1}{24}> n,
\end{equation}
which yields the lower bound   in \eqref{tnle}, while the upper bound is  
a direct consequence of   the definition \eqref{deftn}.  
Since $t_n$ increases with $n$, from \eqref{tnle}
  \begin{equation}
  \label{tnge}
t_n>t_1>\e^{-\pi/2},\quad \forall n \ge 1
\end{equation}
thus the inequality \eqref{k1num} holds for any $n\ge 1$ when taking $t=t_n$. 
Furthermore, from \eqref{tngn}
\begin{equation}
\label{e4pi}
\e^{-\frac{4\pi^2}{|\log t_n|}}\le \e^{-2\pi \sqrt{24 n -1}},
\end{equation}
 and then from \eqref{k1num},  for each $n\in \mathbb N$ 
\begin{equation}
\label{hn}
h_n:=\kappa_1(\e^{-\frac{4\pi^2}{|\log t_n|}})\le 64\, \e^{-\frac{4\pi^2}{|\log t_n|}}
\le  64\, \e^{-2\pi\sqrt{24n-1}}.
\end{equation}
As $h_n$  decreases with $n$, we also  have that 
\begin{equation}
\label{hn1}
h_n\le 64\,\e^{-8\pi}< 2^{-30}\quad \forall n \ge 1.
\end{equation}
Denote $x:=\frac{  \pi^2}{3|\log t_n|}$ and  
multiply \eqref{mt2} by $\frac 23\,\pi^2$ to obtain that   $x$ satisfies 
\[
x^2(1-24\,h_n)-x-\frac 23\pi^2\big(n-\frac{1}{24}\big)=0.
\]
Observe that $h_n=\kappa_1(\e^{-12\,x})$ so it indeed   depends on $x$,
 but we know from \eqref{hn} that it is positive
and exponentially small. Solving the equation  we obtain  
\begin{equation}
\label{solvx}
 \frac{  \pi^2}{3|\log t_n|}=x=
 \frac{\frac 12\,+\sqrt{\frac{2}{3}\pi^2(n-\frac{1}{24})(1-24\,h_n)+\frac 14}}{(1-24\,h_n)}.
\end{equation}
In terms of $r_n$, let us write
\begin{equation}
\label{deltanrn}
\sqrt{\frac{2}{3}\pi^2(n-\frac{1}{24})(1-24\,h_n)+\frac 14}=r_n\,\sqrt{1-\delta_n}
\end{equation}
with
\begin{equation}
\label{defdeltan}
\delta_n=24\,h_n\,\big(1-\frac{9}{\pi^2(24\,n-1)+9}\big)\le 24\,h_n. 
\end{equation}
To estimate the difference in \eqref{tna}, from \eqref{solvx} we  write
\begin{multline}
 \nn
\frac{\pi^2}{3\,|\log t_n|} -(\frac12 +r_n)=\frac{\frac 12 +r_n\, \sqrt{1-\delta_n}}{1-24\,h_n} 
-(\frac12 +r_n) 
\\
=\frac{r_n\,(\sqrt{1-\delta_n}-1)+24\,h_n(\frac 12 +r_n)}{1-24\,h_n}
\end{multline}
whence, from \eqref{defdeltan}
\begin{equation}
\nn
\big|\,\frac{\pi^2}{3\,|\log t_n|} -
\big(\frac12 +r_n\,\big)\big|\le
\frac{50\, h_n\,r_n}{1-24\,h_n},  
\end{equation}
and the proof of \eqref{tna} follows  from \eqref{hn} and the definition of $r_n$.
The proof of \eqref{ltnrn} follows as well from \eqref{solvx} and the above estimates.  
\end{proof}
As consequence of the previous analysis, we obtain
an expression for the factor in \eqref{pn}, that
yields precise asymptotics as $n\to \infty$. 
 
\begin{corollary}
\label{Corfactor}
The factor in front   of the integral term in \eqref{pn} satisfies 
 \begin{equation}
\nn
\frac{f(t_n)}{2\pi \sigma(t_n) t_n^n}=\frac{1}{(2\pi)^{3/2}}
\frac{|\log t_n|^{1/2}}{ \sigma(t_n)} \,
{\rm e}^{\sqrt{\frac {2\pi^2}{3}\,(n-\frac{1}{24})+\frac 14}\,+
 \beta_n},
\end{equation}
where $\beta_n$ satisfies that, for some positive  constant $C$
\begin{equation}
\label{betale}
|\beta_n| \le  C \,{\rm e}^{-2\,\pi\sqrt{24n-1}}.   
\end{equation}
\end{corollary}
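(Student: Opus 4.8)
The plan is to take the logarithm of the factor and read off $\beta_n$ explicitly, and then show that it is exponentially small in $\sqrt n$. Writing $L:=|\log t_n|$ and using $-n\log t_n=nL$,
\[
\log\frac{f(t_n)}{2\pi\,\sigma(t_n)\,t_n^{\,n}}=\log f(t_n)+nL-\log(2\pi)-\log\sigma(t_n).
\]
Substituting the asymptotic formula \eqref{lfa} for $\log f(t_n)$, the terms $-\tfrac12\log(2\pi)-\log(2\pi)=-\tfrac32\log(2\pi)$ together with $\tfrac12\log L-\log\sigma(t_n)$ are exactly what is needed to produce the prefactor $(2\pi)^{-3/2}L^{1/2}/\sigma(t_n)$ appearing in the corollary. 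Hence the stated identity holds with
\[
\beta_n=\frac{\pi^2}{6L}+\Big(n-\frac1{24}\Big)L+E_0(t_n)-r_n,
\]
where $E_0$ is the error term in \eqref{lfa} and $r_n$ is as in \eqref{defrn}; everything now reduces to estimating this combination.

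To that end I would feed in the defining equation $\kappa_1(t_n)=n$, i.e.\ \eqref{mt2}. Set $x:=\tfrac{\pi^2}{3L}$ and $h_n:=\kappa_1(\mathrm e^{-4\pi^2/L})$; multiplying \eqref{mt2} by $\tfrac23\pi^2$ rewrites it as $x^2(1-24h_n)-x=\tfrac23\pi^2(n-\tfrac1{24})=r_n^2-\tfrac14$, equivalently
\[
\Big(x-\tfrac12\Big)^2=r_n^2+24\,x^2h_n,\qquad\qquad\Big(n-\tfrac1{24}\Big)L=\tfrac x2-\tfrac12-12\,x\,h_n.
\]
Since $x^2(1-24h_n)=x+(r_n^2-\tfrac14)>x$ and $1-24h_n>0$ (by \eqref{hn1}), we get $x(1-24h_n)>1$, so $x>1$ and we may take the positive square root. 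Substituting into the formula for $\beta_n$ and using $\tfrac{\pi^2}{6L}=\tfrac x2$ gives $\beta_n=x-\tfrac12-r_n-12\,x\,h_n+E_0(t_n)$, and writing $x-\tfrac12-r_n=\dfrac{24\,x^2h_n}{(x-\tfrac12)+r_n}\ge 0$ this becomes
\[
\beta_n-E_0(t_n)=12\,x\,h_n\Big(\frac{2x}{(x-\tfrac12)+r_n}-1\Big)=12\,x\,h_n\,\frac{1+(x-\tfrac12-r_n)}{(x-\tfrac12)+r_n}.
\]

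The step I expect to need the most care is precisely this last identity, since a crude bound on $\beta_n$ would leave a spurious factor $1/L\asymp\sqrt n$ (via the term $\tfrac{4\pi^2}{L}h_n=12xh_n$), whereas here the $x$ in the numerator is absorbed by the denominator. Indeed, $r_n\ge\tfrac12$ gives $(x-\tfrac12)+r_n\ge x$, hence $0\le x-\tfrac12-r_n\le 24\,x\,h_n$ and $|\beta_n-E_0(t_n)|\le 12\,h_n\bigl(1+24\,x\,h_n\bigr)$. Moreover $x\,h_n=\tfrac{\pi^2}{3L}\kappa_1(\mathrm e^{-4\pi^2/L})$ is bounded uniformly in $n$: by \eqref{k1num} one has $\kappa_1(\mathrm e^{-4\pi^2/L})\le 64\,\mathrm e^{-4\pi^2/L}$, and $s\mapsto s\,\mathrm e^{-4\pi^2 s}$ is bounded on $(0,\infty)$. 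Therefore $|\beta_n-E_0(t_n)|\le C\,h_n$. Finally \eqref{hn} gives $h_n\le 64\,\mathrm e^{-2\pi\sqrt{24n-1}}$, and \eqref{E0le} together with \eqref{e4pi} gives $0<E_0(t_n)\le 8\,\mathrm e^{-2\pi\sqrt{24n-1}}$, whence $|\beta_n|\le C\,\mathrm e^{-2\pi\sqrt{24n-1}}$, which is \eqref{betale}. The applicability of \eqref{k1num}, \eqref{E0le} and \eqref{hn} (in particular the ranges of $t_n$, and $x>\tfrac12$) is guaranteed for all $n\ge 1$ at once by the lower bound $t_n>\mathrm e^{-\pi/2}$ from \eqref{tnge}, so no separate treatment of small $n$ is needed.
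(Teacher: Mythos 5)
Your proof is correct and follows essentially the same route as the paper: combine the functional formula \eqref{lfa} for $\log f(t_n)$ with the defining relation \eqref{mt2} for $t_n$, and observe that the potentially dangerous term $\frac{4\pi^2}{|\log t_n|}h_n\asymp\sqrt n\,h_n$ cancels against $\frac{\pi^2}{3|\log t_n|}-\frac12-r_n$ up to $O(h_n)$. The only difference is cosmetic: you carry out this cancellation via the exact algebraic identity $(x-\tfrac12)^2=r_n^2+24x^2h_n$, whereas the paper expands $\sqrt{1-\delta_n}$ and tracks $O(\cdot)$ and $o(\cdot)$ terms — your version is, if anything, slightly tighter.
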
 

\begin{proof} Let us write the factor
 in  \eqref{pn} as  
\begin{equation}
\nn
\frac{f(t_n)}{2\pi \sigma(t_n) t_n^n}=
\frac{1}{2\pi \sigma(t_n)}\,\e^{\log f(t_n)+n |\log t_n|}.
\end{equation}
From   \eqref{lfa} evaluated at $t=t_n$, 
\eqref{mt2} and the definition of $h_n$ \eqref{hn}
it follows  after collecting terms that 
\begin{align} 
\nn
\log f(t_n)+n |\log t_n|=&\frac{\pi^2}{3|\log t_n|}- \frac 12 +
\frac 12 \log\,|\log t_n|
-\log(\sqrt{2\pi})
\\ 
\nn
 + E_0(t_n)-\frac{4\pi^2}{|\log t_n|}&\,h_n
\\  \nn
=&\sqrt{\frac 23 \pi^2(n-\frac{1}{24})(1-24\,h_n)+\frac 14}
\\
\label{factor} 
+\frac 12 \log\,|\log t_n|
-&\log(\sqrt{2\pi}) +E_0(t_n)+\frac{4\pi^2}{|\log t_n|}\,h_n.
\end{align} 
The second identity follows from \eqref{solvx}. 
Next, observe that if we write the square root as in \eqref{deltanrn}, 
 after expanding 
 $\sqrt{1-\delta_n}=1-\frac{\delta_n}{2}+O(\delta_n^2) $ we obtain  
\begin{equation}
\nn
\sqrt{\frac 23 \pi^2(n-\frac{1}{24})(1-24\,h_n)+\frac 14}=
r_n\,\sqrt{1-\delta_n}
\\
=r_n\,
\big(1-12\, h_n\big)+o(h_n).
\end{equation}
Let us express, with the aid of Corollary \ref{cortn}
 \[
 \frac{4\pi^2}{|\log t_n|}\,h_n= 12\, h_n \frac{\pi^2}{3|\log t_n|}=
 12\, h_n \big(\frac 12 +\sqrt{\frac 23 \pi^2(n-\frac{1}{24})+\frac 14\,}\,\big) +O(h_n^2 \sqrt n).
 \]
Substitution of the last two formulae into 
\eqref{factor} yields 
\[
\log f(t_n)+n |\log t_n|= 
\sqrt{\frac {2\,\pi^2}{3}(n-\frac{1}{24})+\frac{1}{4}}\,+
 \frac 12 \log\,|\log t_n|
-\log(\sqrt{2\pi})\,+\beta_n, 
\]
with  $\beta_n=E_0(t_n)+o(h_n)+6\, h_n +O(h_n^2 \sqrt n) $.  
  
But we know  from \eqref{E0le} and \eqref{e4pi} that 
\[ 
0<E_0(t_n)< 8\,\e^{-2\pi\sqrt{24n-1}},
\]
which, together with  \eqref{hn}  yields \eqref{betale}. 
\end{proof}

\begin{remark} Good approximations to $\frac{|\log t_n|^{\frac 12}}{\sigma(t_n)}$   in terms of $n$  
follow  from Corollary \ref{cormsigma} and \eqref{tna}. In particular, the blunt estimate 
\begin{equation}
\label{seterm}
\frac{|\log t_n|^{\frac 12}}{\sigma(t_n)}\sim\frac{\pi}{2\sqrt 3\, n}
\end{equation}
is obtained.  We prefer to keep that term untouched here, as some simplifications 
occur when considering the integral term in the next section, 
and more precise estimates are presented there. 
\end{remark}

\section {The expansion of the integral term}
\label{tit}
Let us turn now to consider the integral in \eqref{pn}.
In \cite{LBD}, it is proved that 
\begin{equation}
\label{i2}
\int^{\pi\sigma(t_n)}_{-\pi\sigma(t_n)}
\bb E\, \e^{i\theta\,Z(t_n)}\, d\theta \to \sqrt{2\pi}\quad
\mbox{ as } n \to  \infty, 
\end{equation}
which, 
together with   Corollary  \ref{Corfactor} and \eqref{seterm} yields  the  asymptotic
formula \eqref{hra} from \eqref{pn}.  

We follow the approach of \cite{BM} to obtain an asymptotic series  expansion for
the above integral. The analysis here is more
involved, among other reasons due to the fact that  the terms are 
defined implicitly,  and we need to consider approximations. We develop the integrand
 in terms of the cumulants of the random variables $X(t)$.   
In the appendix, we obtain explicit expressions for each of the cumulants as series of rational functions
of $t$, whose numerators are given in terms of Eulerian polynomials. Simple functional  formulae
similar to those already obtained in the previous section  for the first two cumulants
 $\kappa_1(t)$ and $\kappa_2(t)$ 
are also  deduced from the recurrence relation \eqref{rcum1j}. They  yield accurate   approximations
 (as $t\uparrow 1$), which  permit to control the errors. The expansion results  
to be given  in inverse powers of   $1+2\,c_n^2$,  with $c_n^2$ defined  in Section \ref{SPS}:
 \begin{equation}
\label{defcn}
c_n^2=\big(\sigma(t_n)\, |\log t_n|\big)^2.
\end{equation}
It is easy to see from  Corollary
\ref{cormsigma} and \eqref{tna} that $c_n^2=O(\sqrt n)$.
It is indeed very nearly $r_n$, which was defined in \eqref{defrn}, 
 as precisely established in the following lemma. 

\begin{lemma}
\label{lemcn}
 The sequence  $c_n^2$
  satisfies that, for some positive constant $C$, 
\begin{equation}
\label{estdn}
-\,C\,\sqrt n\,{\rm e}^{-2\,\pi\sqrt{24n-1}}
 \le  c_n^2\,-\,r_n
 \le\,C\, n\,{\rm e}^{-2\,\pi\sqrt{24n-1}}.
\end{equation}
\end{lemma}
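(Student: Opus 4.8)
The plan is to start from the definition $c_n^2 = \kappa_2(t_n)\,|\log t_n|^2$ and substitute the functional formula \eqref{sigma2a} from Corollary \ref{cormsigma}, namely $\kappa_2(t_n) = \frac{\pi^2}{3|\log t_n|^3} - \frac{1}{2|\log t_n|^2} + E_2(t_n)$. Multiplying by $|\log t_n|^2$ gives
\[
c_n^2 = \frac{\pi^2}{3|\log t_n|} - \frac12 + |\log t_n|^2\,E_2(t_n).
\]
So the whole problem reduces to two things: (i) showing that $|\log t_n|^2 E_2(t_n)$ is exponentially small of the stated order, and (ii) comparing $\frac{\pi^2}{3|\log t_n|} - \frac12$ with $r_n$. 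For (i), recall from \eqref{defE2} that $E_2(t_n)$ is a combination of $\kappa_1$ and $\kappa_2$ evaluated at $\e^{-4\pi^2/|\log t_n|}$, and the bound \eqref{cjcj} (equivalently \eqref{k1num} and its $j=2$ analogue, valid for all $n\ge 1$ by \eqref{tnge}) shows each such term is $O(\e^{-4\pi^2/|\log t_n|}) = O(\e^{-2\pi\sqrt{24n-1}})$ by \eqref{e4pi}; since $|\log t_n|^{-2} = O(n)$ from \eqref{tngn}, multiplying back by $|\log t_n|^2$ — which is bounded — leaves something $O(\e^{-2\pi\sqrt{24n-1}})$, well inside the claimed error.

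For (ii), which is the heart of the matter, I would invoke \eqref{tna} of Corollary \ref{cortn}: $\bigl|\frac{\pi^2}{3|\log t_n|} - (\frac12 + r_n)\bigr| \le C\sqrt n\,\e^{-2\pi\sqrt{24n-1}}$. This says precisely that $\frac{\pi^2}{3|\log t_n|} - \frac12$ differs from $r_n$ by at most $C\sqrt n\,\e^{-2\pi\sqrt{24n-1}}$. Combining with the estimate from step (i) and the triangle inequality yields $|c_n^2 - r_n| \le C\sqrt n\,\e^{-2\pi\sqrt{24n-1}} + C\e^{-2\pi\sqrt{24n-1}} \le C' n\,\e^{-2\pi\sqrt{24n-1}}$, giving the upper bound in \eqref{estdn}. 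The lower bound with only a $\sqrt n$ factor should come from being slightly more careful: the $|\log t_n|^2 E_2(t_n)$ contribution can be signed (from \eqref{defE2}, the leading piece is the negative term $-\frac{8\pi^2}{|\log t_n|^3}\kappa_1(\cdots)$ times $|\log t_n|^2$, which after using $|\log t_n|^{-1} \asymp \sqrt n$ is itself only $O(\sqrt n\,\e^{-2\pi\sqrt{24n-1}})$, while the other piece carries an extra power of $|\log t_n|^{-2} \asymp n$ but a smaller exponential, hence is lower order), so combined with the two-sided bound hidden in the derivation of \eqref{tna} one recovers a $\sqrt n$ factor on the lower side.

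The main obstacle is bookkeeping the powers of $|\log t_n|$ against the strength of the exponential, and in particular justifying the asymmetry between the $n$ and $\sqrt n$ prefactors in \eqref{estdn}. The cleanest route is to go back to the explicit solution \eqref{solvx}: write $\frac{\pi^2}{3|\log t_n|} = x$ with $x^2(1-24 h_n) - x - \frac23\pi^2(n-\frac1{24}) = 0$ where $h_n = \kappa_1(\e^{-12x}) = O(\e^{-2\pi\sqrt{24n-1}})$, so that $c_n^2 = x - \frac12 + |\log t_n|^2 E_2(t_n)$; then expanding $x$ around $r_n + \frac12$ via \eqref{deltanrn}–\eqref{defdeltan} one sees $x - \frac12 - r_n = r_n(\sqrt{1-\delta_n}-1) + \frac{24 h_n(\frac12 + r_n)}{1-24 h_n}$, which is manifestly $\le 0$-dominated-by-nothing-problematic and of size $O(r_n h_n) = O(\sqrt n\,\e^{-2\pi\sqrt{24n-1}})$ on \emph{both} sides, the slack to $n\,\e^{-2\pi\sqrt{24n-1}}$ on the upper side being exactly the room needed to absorb the $|\log t_n|^2 E_2(t_n)$ term whose worst component scales like $n\,\e^{-2\pi\sqrt{24n-1}}$. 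I would present it in this order: (1) substitute \eqref{sigma2a}; (2) bound $|\log t_n|^2 E_2(t_n)$ using \eqref{cjcj}, \eqref{tngn}, \eqref{e4pi}; (3) quote or re-derive \eqref{tna}; (4) assemble via the triangle inequality, taking care on the lower bound.
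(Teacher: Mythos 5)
Your proposal is correct and follows essentially the same route as the paper: substitute \eqref{sigma2a} to write $c_n^2=\frac{\pi^2}{3|\log t_n|}-\frac12+|\log t_n|^2E_2(t_n)$, handle the main term via \eqref{solvx}--\eqref{defdeltan} (equivalently \eqref{tna}), and bound the error via \eqref{defE2}, \eqref{cjcj} and \eqref{e4pi}; the paper streamlines this by observing that the $-\frac{8\pi^2}{|\log t_n|}\kappa_1(\e^{-4\pi^2/|\log t_n|})$ part of $|\log t_n|^2E_2(t_n)$ cancels exactly against the $24\,x\,h_n$ term coming from \eqref{solvx}, leaving $c_n^2=r_n\sqrt{1-\delta_n}+\bigl(\tfrac{4\pi^2}{|\log t_n|}\bigr)^2\kappa_2(\e^{-4\pi^2/|\log t_n|})$, but your triangle-inequality bookkeeping reaches the same asymmetric bounds. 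One correction to your mid-argument reasoning: both pieces of $E_2(t_n)$ carry the \emph{same} exponential order $\asymp\e^{-4\pi^2/|\log t_n|}$ by \eqref{cjcj}, and it is precisely the positive $\kappa_2$ piece --- of size $O(n\,\e^{-2\pi\sqrt{24n-1}})$ after multiplication by $|\log t_n|^2$, hence not ``lower order'' --- that forces the prefactor $n$ rather than $\sqrt n$ in the upper bound, as your final paragraph in fact correctly acknowledges.
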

\begin{proof}
From \eqref{sigma2a},  \eqref{solvx}, \eqref{defE2} and \eqref{hn},  
\begin{multline}
\nn
c_n^2 =\frac{\pi^2}{3|\log t_n|}-\frac 12 + |\log t_n|^2\,E_2(t_n)= 
\sqrt{\big(\frac {2\pi^2}{3}(n-\frac{1}{24})+\frac{1}{4}\big)\,(1-\delta_n)}\\ +
\Big(\frac{4\pi^2}{|\log t_n|}\Big)^2\,\kappa_2(\e^{-4\pi^2/|\log t_n|}),
\end{multline}
with $\delta_n$ as defined in \eqref{defdeltan}. 
From \eqref{tna},  \eqref{cjcj} and  \eqref{e4pi}
we conclude that \eqref{estdn} holds. 
\end{proof}
\begin{theorem}
\label{Tint}
Given $N>0$, the integral in \eqref{i2} can be expanded as 
\begin{align}
\label{Thint}
\int^{\pi\sigma(t_n)}_{-\pi\sigma(t_n)}\!\bb E\, {\rm e}^{i\theta\,Z(t_n)}\, d\theta=
\sqrt{2\pi}\,\sqrt{\frac{2\,c_n^2}{1+2\,c_n^2}}
\Big(1-\sum_{\ell= 1}^N\,\frac{D_\ell}{(1+2\,c_n^2)^\ell }\Big)
+\mathcal E_{N+1},
\end{align}
where:
\begin{itemize}
\item[$\circ$]
The coefficients $D_\ell$ are
\begin{equation}
\label{defdl}
D_\ell=(-1)^{\ell+1}\,\frac{(l+1)}{4^\ell} \sum_{k=0}^{\ell+1}(-1)^k\, 2^k\,\binom{2\ell}{k}
\frac{1}{(\ell+1-k)!},  
\end{equation}
and they 
satisfy 
\begin{equation}
\label{Dele}
0<D_\ell<2\,3^{\ell-1}.
\end{equation}
\item[$\circ$]
 The error term $\mathcal E_{N+1}=\mathcal E_{N+1}(n)$
   satisfies  
\begin{equation}
\label{Eint}
n^{N/2}\,\mathcal E_{N+1} \to 0 \mbox{ as } n\to \infty.
\end{equation}
\end{itemize}
\end{theorem}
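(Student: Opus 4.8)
The plan is to expand the integrand $\bb E\, \e^{i\theta Z(t_n)}$ using the cumulant generating function and then integrate term by term. Write $\theta$ for the integration variable and recall that, by \eqref{xsum}, $X(t_n)=\sum_j j\,Y_j(t_n)$ is a sum of independent geometric variables, so $K_{X(t_n)}(\theta)=\sum_j K_{j Y_j(t_n)}(\theta)$. Since $Z(t_n)=(X(t_n)-n)/\sigma(t_n)$, we have
\[
\bb E\,\e^{i\theta Z(t_n)}=\exp\!\Big(-\frac{i\theta n}{\sigma(t_n)}+K_{X(t_n)}\!\big(\tfrac{\theta}{\sigma(t_n)}\big)\Big)
=\exp\!\Big(-\frac{\theta^2}{2}+\sum_{j\ge 3}\frac{\kappa_j(t_n)}{j!}\frac{(i\theta)^j}{\sigma(t_n)^j}\Big),
\]
using that $\kappa_1(t_n)=n$ and $\kappa_2(t_n)=\sigma(t_n)^2$. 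The first step is therefore to control the higher cumulants: from the functional formulae obtained in the Appendix (analogous to Lemma \ref{cumulants}) one has $\kappa_j(t_n)\sim c_j\,|\log t_n|^{-(j+1)}$ as $n\to\infty$, so that $\kappa_j(t_n)/\sigma(t_n)^j\asymp |\log t_n|^{\,j/2-1}$, which is small since $|\log t_n|\to 0$; quantitatively it is of order $n^{-(j-2)/4}$ by Corollary \ref{cortn} and Corollary \ref{cormsigma}. This is the key input that makes the expansion asymptotic.

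The second step is the formal expansion. Write $g(\theta)=\sum_{j\ge 3}\frac{\kappa_j(t_n)}{j!}\frac{(i\theta)^j}{\sigma(t_n)^j}$ and expand $\e^{g(\theta)}=1+g+\frac{g^2}{2}+\cdots$, collecting terms by their order in the natural small parameter. Since the $j$-th cumulant contributes a factor $\sigma(t_n)^{-j}$ and the leading behaviour pairs a factor $\theta^m$ with a Gaussian weight $\e^{-\theta^2/2}$ that produces $\sigma(t_n)$-independent moments, one finds that after regrouping, the coefficient of the $\ell$-th correction is a polynomial in $\theta$ times $\e^{-\theta^2/2}$ whose integral over $\mathbb R$ produces the combinatorial number $D_\ell$; the power of the small parameter turns out to be exactly $(1+2c_n^2)^{-\ell}$, after one substitutes the asymptotics $\sigma(t_n)^2|\log t_n|^2=c_n^2$ and the leading value $\kappa_3(t_n)|\log t_n|^3\to$ const, $\kappa_4(t_n)|\log t_n|^4\to$ const, etc. I would verify the closed form \eqref{defdl} by matching the generating function of the Gaussian integrals of Hermite-type polynomials against the asymptotic values of the cumulants; the bound $0<D_\ell<2\cdot 3^{\ell-1}$ then follows by a direct estimate on the alternating sum in \eqref{defdl} (bounding $\binom{2\ell}{k}2^k/(\ell+1-k)!$ crudely and summing the geometric-like series).

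The third step is to justify term-by-term integration and to handle the truncation of the range $[-\pi\sigma(t_n),\pi\sigma(t_n)]$ to $\mathbb R$. For the tail $|\theta|\le \pi\sigma(t_n)$ with $|\theta|$ of order at most a power of $\log\sigma(t_n)$, one uses that $g(\theta)$ is genuinely small and a Taylor remainder estimate controls $\e^{g}-\sum_{k\le M}g^k/k!$; for $|\theta|$ beyond that, one uses a crude bound $|\bb E\,\e^{i\theta Z(t_n)}|\le C\e^{-c\theta^2}$ on a fixed neighbourhood together with the standard argument that $|\varphi_{X(t_n)}(\theta/\sigma(t_n))|$ is bounded away from $1$ on the remaining compact set (this is exactly the local-CLT estimate already used in \cite{LBD} to prove \eqref{i2}), so that the discarded part is exponentially small in $n$ and absorbed into $\mathcal E_{N+1}$. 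Finally one replaces the exact cumulant ratios by their asymptotic values $D_\ell(1+2c_n^2)^{-\ell}$; the errors made here are, by the quantitative bounds from the Appendix together with Lemma \ref{lemcn}, of order $n^{-1/2}$ relative to the last retained term, hence after collecting everything $\mathcal E_{N+1}=o(n^{-N/2})$, which is \eqref{Eint}.

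The main obstacle is the bookkeeping in step three: the terms $\kappa_j(t_n)/\sigma(t_n)^j$ are only defined implicitly through $t_n$, so one cannot simply expand in a single explicit small parameter and must carefully show that replacing each implicit quantity by its $r_n$-approximation (via Corollary \ref{cortn} and Lemma \ref{lemcn}) introduces errors that are uniformly of the claimed order in $\ell\le N$, and in particular smaller than the genuine $\ell$-th term. Keeping the Gaussian-moment combinatorics consistent with the regrouping so that precisely the coefficients \eqref{defdl} emerge — rather than some equivalent but messier expression — is the delicate point; everything else is a routine, if lengthy, Laplace-type asymptotic analysis.
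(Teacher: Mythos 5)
Your skeleton (tail estimate on $|\theta|/\sigma(t_n)>a(1-t_n)$ via the characteristic--function bound from \cite{LBD}, cumulant expansion of $K_{Z(t_n)}$ on the central region, Gaussian integration, error bookkeeping) matches the paper's. But there are two genuine gaps, one of which is fatal to the computation of the coefficients.

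First, you propose to substitute only the \emph{leading} asymptotics of the cumulants ($\kappa_j(t_n)\sim c_j|\log t_n|^{-(j+1)}$, ``the leading value $\kappa_j(t_n)|\log t_n|^{j+1}\to$ const''). This loses the subleading term in $\kappa_j(t)=\frac{\pi^2 j!}{6|\log t|^{j+1}}-\frac{(j-1)!}{2|\log t|^{j}}+E_j(t)$ (Proposition \ref{cumulants>2}), which is smaller than the leading term by exactly one factor of $|\log t_n|\asymp n^{-1/2}$ --- i.e.\ it contributes at precisely the order of the $\ell$-th correction you are trying to compute, already for $\ell=1$. Dropping it gives wrong values of $D_\ell$ for every $\ell$. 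Moreover, the mechanism by which the corrections organize into clean powers of $(1+2c_n^2)^{-1}$ with the closed-form coefficients \eqref{defdl} is not something that ``turns out'': it is the content of the argument. The paper resums the \emph{entire} cumulant series exactly, using that both the leading and subleading parts are geometric-like in $j$, obtaining $K_{Z(t_n)}(\theta)=-\frac{\theta^2}{2}\,\lambda_n(1+g_1(z))+g_2(z)+(\text{small})$ with $z=i\theta/c_n$, $g_1(z)=z/(1-z)$, $g_2(z)=\frac z2+\frac12\log(1-z)$, $\lambda_n=1+\frac{1}{2c_n^2}$. Only then does a single-variable Taylor expansion in $z$, combined with $\int \e^{-\frac{\theta^2}{2}\lambda_n/(1-z)}\theta^{2\ell}d\theta\propto((1-z)/\lambda_n)^{\ell+\frac12}$ and $\lambda_nc_n^2=\frac12(1+2c_n^2)$, produce $D_\ell=(-1)^{\ell+1}\frac{1}{\ell!}\partial_x^{(2\ell)}\big(\e^{x/2}(1-x)^{\ell+1}\big)\big|_{x=0}$. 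A na\"{\i}ve Edgeworth regrouping of $\e^{g}=1+g+g^2/2+\cdots$ in powers of $n^{-1/4}$ does not, without this resummation, identify either the coefficients or the variable $(1+2c_n^2)^{-1}$.

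Second, the bound $0<D_\ell<2\cdot 3^{\ell-1}$: the upper bound can indeed be obtained crudely, but the \emph{positivity} cannot follow from ``bounding the alternating sum crudely,'' since an alternating sum has no a priori sign. The paper proves $(-1)^{\ell+1}J_\ell\ge 0$ by pairing consecutive terms $a_k=2^k\binom{2\ell}{k}\frac{1}{(\ell+1-k)!}$ and checking the monotonicity $a_k\le a_{k+1}$ for $k\le\ell$; some argument of this kind is needed and is absent from your proposal.
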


\begin{proof}
To obtain the asymptotics, it is convenient to  split  the integration interval 
$
\mathcal A:=\{\theta:\frac{|\theta|}{\sigma(t_n)}<\pi\}
$ into disjoint regions  $\mathcal A_{1}$ and $\mathcal A_{2}$,  
according to $\frac{|\theta|}{\sigma(t_n)}\le a\,(1-t_n)$ or not 
 for some positive    constant  $a<1$: 
 \[
\mathcal A =\mathcal A_{1}\cup \mathcal A_{2},\quad \mbox{ for } 
\mathcal A_{1}=\{\theta: \frac{|\theta|}{\sigma(t_n)}\le a\,(1-t_n)\}.
\]
The value of  $a$ is fixed hereafter and it is  not included  in the notation.
 
 Let us consider first the integral in $\mathcal A_{2}$, starting with 
 \[ 
 \big| \int\limits_{\mathcal A_2 }
  \bb E\, \e^{i\theta\,Z(t_n)}\, d\theta\,\big|\le
   \int\limits_{\mathcal A_2 }\!\big|\varphi^{}_{X(t_n)}
  (\theta/\sigma)\big| \, d\theta.
\]
We follow the computations in the proof of the  theorem in 
\cite[p.119]{LBD} to estimate the integrand. 
From  \eqref{deffi} and \eqref{eulerf}, 
\begin{align}
\nn
\log \big|\varphi^{}_{X(t)} (y)\big|=&
-\sum\limits_{\ell\ge 1}
\frac{2\,t^\ell(t^\ell +1)}{\ell\,(1-t^\ell)
\big((\frac{1-t^\ell}{\sin(y\ell/2)})^2+4\,t^\ell\big)}
\\ \nn
&\le -\,\frac{ 2\,t\,(t+1)}{(1-t) \big(a^{-2}\,\pi^2+4 t  \big)},
\end{align}  
where the sum is estimated by taking only its first term and then recalling that
the functions $\sin(x)$ and $\frac{x}{\sin(x)}$ are increasing for
 $x\in [0,\pi/2]$. Whence,  if $a(1-t)<|y|<\pi$, 
\[
\big(\frac{1-t}{\sin (y/2)}\big)^2<\big(\frac{a(1-t)/2}{ \sin(a(1-t)/2)\,a/2}\big)^2
 \le (\frac \pi a)^2.
\]
Finally, take $t=t_n$ in the above estimates and  recall that, as observed in \eqref{tnge}, 
   $t_n>\e^{-\frac\pi 2}$ for all  $n \in \mathbb N$  to conclude
\[
\big|\! \int\limits_{\mathcal A_2 }
  \bb E\, \e^{i\theta\,Z(t_n)}\, d\theta\,\big|\le 2 \pi\sigma(t_n)\,\e^{-\frac{C}{1-t_n}}, 
  \mbox{ where } C=2\,\e^{-\frac\pi 2}\frac{(1+\e^{-\frac\pi 2})}{(a^{-2}\pi^2+4)},
  \]
which, together with \eqref{sigma2a}, \eqref{tnle}  and \eqref{tna} implies that, 
for some positive  constants $C_1$ and $C_2$ we have  
\begin{equation}
\label{tailint}
\big|\int\limits_{\mathcal A_2}
 \!\bb E\, \e^{i\theta\,Z(t_n)}\, d\theta\,\big|\le C_1\e^{-C_2\sqrt n}.
\end{equation}
To  consider the integral over $\mathcal A_1$ we expand the integrand in \eqref{i2}  in terms
of the cumulants. 
To that end, observe that, for $t\in (0,1)$, we may write  
\begin{equation}
\label{phicum}
\bb E\, \e^{i\theta\,Z(t)}=\varphi^{}_{Z(t)}(\theta)=\exp{\big(K_{Z(t)}(\theta)\big)},
\end{equation}
with $ K_{Z(t)}$ the cumulant generating function of $Z(t)$ defined in the previous section. 
It is clear from the definition  \eqref{zt} that the first cumulant of
$Z(t)$ is zero, while the cumulants of order $j$ for  $ j\ge 2$ are $\kappa_j(t)/\sigma(t)$, 
where $\kappa_j (t)$ are the cumulants of the random variable $X(t)$ 
given by \eqref{rcum1j}. For each $M>2$, we consider the Taylor expansion of  order  $M$   for $K_{Z(t)}$.   
 From Lemma  \ref{lemaderK}, 
\begin{equation}
\label{KZm}
K_{Z(t)}(\theta)=
\sum_{j\ge 2}^M \kappa_j(t)\, \frac{(i\theta)^j}{j!\,\sigma(t)^j}+\mathcal{R}^{(1)}_{M+1},
\end{equation}
where the remainder $\mathcal{R}^{(1)}_{M+1}=\mathcal{R}^{(1)}_{M+1}(\theta,t)$
 can be estimated with the aid of 
Corollary \ref{derKZ}: 
\begin{equation}
\label{Rm1}
\big|\mathcal{R}^{(1)}_{M+1}(\theta,t)\big|
\le
\frac{|\theta|^{M+1}}{(M+1)!}\, \frac{\kappa_{M+1}(t)}{\sigma(t)^{M+1}}.
\end{equation}
 We substitute  the expressions for the cumulants obtained in Proposition \ref{cumulants>2}
into \eqref{KZm}  to write
\begin{align}
\label{KZ}
K_{Z(t)}\,(\theta)=-&\,\frac{\theta^2}{2}\,+\,\sum_{j=3}^{M} 
\big(\frac{i\theta}{\sigma(t)}\big)^j 
\,\big(\frac{\pi^2}{6|\log t|^{j+1}}-\frac{1}{2\,j|\log t|^{j}}\big)
\\ \nn
 +&\,\mathcal{R}^{(1)}_{M+1}+
\mathcal{R}^{(2)}_{M+1},
\end{align}
where 
\begin{equation} 
\label{defrm2}
\mathcal{R}^{(2)}_{M+1}=\mathcal{R}^{(2)}_{M+1}(\theta,t):=
\sum _{j=3}^M \big(\frac{i\theta}{\sigma(t)}\big)^j
\,\frac{E_j(t)}{j!}.
\end{equation}
If $\frac{|\theta|}{\sigma(t)|\log t|}<1$  we deduce from \eqref{Ejle} that  there is
 a positive  constant $\tilde C_M$ that may depend on $M$ such that, if $t>\e^{-\frac \pi 2}$, 
\begin{equation}
\label{rm2le}
|\mathcal{R}^{(2)}_{M+1}(\theta,t)|\le \tilde C_M 
\frac{\e^{-\frac{4\pi^2}{|\log t|}}}{|\log t|^M},  
\end{equation}
and that 
 the series resulting 
from taking $M=\infty$ in the sum in  \eqref{KZ} converges  to 
\begin{align}
\nn
\sum_{j\ge 3}& \big(\frac{i\theta}{\sigma(t)}\big)^j 
\,\Big(\frac{\pi^2}{6|\log t|^{j+1}}-\frac{1}{2\,j|\log t|^{j}}\Big)\,=
\\ \nn
&\frac{-\theta^2}{2}\Big(\frac{\pi^2}{3\,|\log t|^3\sigma(t)^2}
\frac{\frac{i\theta}{\sigma(t) |\log t|}}{(1-\frac{i\theta}{\sigma(t) |\log t|})}\Big)
\\ \nn
&\qquad +\frac{1}{2}\,\frac{i\theta}{\sigma(t) |\log t|} \,+
\frac{1}{4}\,\big(\frac{i\theta}{\sigma(t) |\log t|}\big) ^2 
+\,\frac{1}{2}\log\big(1- \frac{i\theta}{\sigma(t) |\log t|}\big).
\end{align}
Thus, from \eqref{KZ} we may write, for $\frac{|\theta|}{\sigma(t)|\log t|}<1$:
\begin{multline}
\label{KZinf}
K_{Z(t)}\,(\theta)=-\,\frac{\theta^2}{2}\,\Big(\,1+\frac{\pi^2}{3|\log t|^3\sigma(t)^2}\,
\frac{\frac{i\theta}{\sigma(t) |\log t|}}{(1-\frac{i\theta}{\sigma(t) |\log t|})}
+\,\frac {1}{2\sigma(t)^2 |\log t|^2 }\,\Big)
\\
+\frac{1}{2}\,\frac{i\theta}{\sigma(t) |\log t|} \,+
\,\frac{1}{2}\log\big(1- \frac{i\theta}{\sigma(t) |\log t|}\big)
+\sum^3_{\ell=1} \mathcal{R}^{(\ell)}_{M+1}
\end{multline}
where 
\begin{equation}
\label{defrm3}
\mathcal{R}^{(3)}_{M+1}=\mathcal{R}^{(3)}_{M+1}(\theta,t)=
-\sum_{j\ge M+1} \big(\frac{i\theta}{\sigma(t)}\big)^j 
\,\big(\frac{\pi^2}{6|\log t|^{j+1}}-\frac{1}{2\,j|\log t|^{j}}\big).
\end{equation}
It follows that for some constant $C>0$ that depends only on $a$, 
\begin{equation}
\label{rm3le}
|\mathcal{R}^{(3)}_{M+1}(\theta,t)|\le C \,\big(\frac{|\theta|}{\sigma(t)}\big)^{M+1}
\frac{1}{|\log t|^{M+2}}.
\end{equation}
Observe next that if $\theta \in \mathcal A_1$, then $\frac{|\theta|}{\sigma(t_n)|\log t_n|}<a<1$, so
\eqref{KZinf} is valid when evaluated at  $\theta \in \mathcal A_1$ and $t=t_n$, as well as the estimates
\eqref{Rm1}, \eqref{rm2le} and \eqref{rm3le}.
Recall the definition of $c_n$  \eqref{defcn}, and let us  denote
 \begin{equation}
 \label{defla}
\lambda_n= 1+\frac{1}{2c_n^2} =1+\frac{1}{2\,|\log t_n|^2\sigma^2(t_n)}, 
\end{equation} 
\begin{equation}
\label{defg12}
g_1(z)=\frac{z}{1-z},\qquad g_2(z)=\frac z2+\frac12 \log { (1-z)},
\end{equation}
to write, from \eqref{sigma2a} and   \eqref{KZinf}: 
\begin{align}
\label{Kintegrand}
\exp\big(K_{Z(t_n)}(\theta)\big)
=\,&\exp{\{
-\frac{\,\theta^2}{2}\lambda_n\, \big(1+\,g_1(i\theta/c_n)\big)
+g_2(i\theta/c_n)\}}
\\ \nn
& \times\,\exp{\{
\frac{\,\theta^2}{2}\,\epsilon_n\,g_1(i\theta/c_n)+\sum^3_{\ell=1}
 \mathcal{R}^{(\ell)}_{M+1}
\}}
\\ \nn
=\, &\exp{\{-\frac{\,\theta^2}{2}\lambda_n\, \big(1+\,g_1(i\theta/c_n)\big)
+g_2(i\theta/c_n)\}}
\\ \nn
&\times\,\big(\exp{\{
\frac{\,\theta^2}{2}\,\epsilon_n\,g_1(i\theta/c_n)+
\sum^3_{\ell=1} \mathcal{R}^{(\ell)}_{M+1}\}}-1\big)
\\ \nn
&\,+\,\exp{\{-\frac{\,\theta^2}{2}\,\lambda_n\, \big(1+\,g_1(i\theta/c_n)\big)
+g_2(i\theta/c_n)\}},
\end{align}
for  
\begin{equation}
\label{defeps}
\epsilon_n=\frac{E_2(t_n)}{\sigma^2(t_n)},\qquad |\epsilon_n|
\le C\,\frac{ \e^{-\frac{4\pi^2}{|\log t_n|}}}{|\log t_n|}\le C\sqrt n\,\e^{-2\pi\sqrt{24n-1}},
\end{equation}
for some positive constant $C$, as follows from \eqref{defE2}, \eqref{cjcj}, 
\eqref{tna} and \eqref{e4pi}.

We proceed now to obtain a suitable expansion for  the last line in \eqref{Kintegrand}
following the procedure in \cite{BM}. 
For each $N_0\ge 1$,  
\begin{align}
\nn
\e^{-\frac{\,\theta^2}{2}\,\lambda_n\,g_1(z)\,+g_2(z)}=1+
\sum_{j=1}^{N_0} \frac{z^j}{j!}\,\partial_z^{(j)}
\e^{-\frac{\,\theta^2}{2}\,\lambda_n\,g_1(z)+g_2(z)}\vert_{z=0}
+\,\mathcal{R}^{(4)}_{N_0+1}(z). 
\end{align}
In the case $z=iy$, $y\in \mathbb R$ the remainder satisfies 
\begin{equation}
\label{r4N}
|\mathcal{R}^{(4)}_{N_0+1}(z)|\le \frac{|z|^{N_0+1}}{(N_0+1)!}\,\big|\partial_z^{(N_0+1)}\,
\e^{-\frac{\,\theta^2}{2}\,\lambda_n\,g_1(z)+g_2(z)}\big\vert_{z=i\xi}\big|,\quad |\xi|<|y|.
\end{equation}
Substitute the above expansion with $z=i \theta/ c_n$ in \eqref{Kintegrand} and integrate over
 $\mathcal A_1$ to obtain 
 that for each $N_0 \ge 1$ 
\begin{multline}
\label{If}
 \int_{\mathcal A_1}
\bb E\, \e^{i\theta\,Z(t_n)}\, d\theta = \int_{\mathcal A_1}
\exp\big(K_{Z(t_n)}(\theta)\big)\, d\theta \\
 =\!\int _{\mathcal A_1}\!
\e^{-\frac{\,\theta^2}{2}\lambda_n}\,
\Big( 1+\, \sum_{j= 1}^{N_0} \frac{(i\theta/c_n)^j}{j!}\partial_z^{(j)}
\e^{-\frac{\theta^2}{2}\,\lambda_n\,g_1(z)+g_2(z)}\,
\big\vert_{z=0}\,\Big)\,d\theta\,
\\ 
\phantom{123456}+\int_{\mathcal A_1}
\e^{-\frac{\,\theta^2}{2}\lambda_n}\,\mathcal{R}^{(4)}_{N_0+1}(i\theta/c_n)\,d\theta\,+
\mathcal E_{M+1}^{(1)},
\end{multline}
where $\mathcal E_{M+1}^{(1)}$ is the error term arising from \eqref{Kintegrand}:
\begin{multline}
\label{defem1}
\mathcal E_{M+1}^{(1)}(n)=\int _{\mathcal A_1}
 \exp{\{-\frac{\,\theta^2}{2}\lambda_n\, \big(1+\,g_1(i\theta/c_n)\big)
+g_2(i\theta/c_n)\}}
\\ 
\times\,\big(\exp{\{
\frac{\,\theta^2}{2}\,\epsilon_n\,g_1(i\theta/c_n)+\sum^3_{\ell=1} 
\mathcal{R}^{(\ell)}_{M+1}
\}}-1\big)\,d\theta.
\end{multline}

It is easy to see that, for $j\in \mathbb N$ and $z\in \mathbb C$, 
\begin{equation}
\label{remder}
\partial_z^{(j)}\,
\e^{-\frac{\,\theta^2}{2}\,\lambda_n\,g_1(z)+g_2(z)}=
\e^{-\frac{\,\theta^2}{2}\,\lambda_n\,g_1(z)+g_2(z)}P_{j,z}(\lambda _n\,\theta^2),
\end{equation}
where $P_{j,z}(\cdot)$ is a polynomial of degree $j$ with coefficients depending on products 
of derivatives (up to order $j$) of the functions $g_1(z)$ and $g_2(z)$. Those derivatives are 
directly seen to be bounded if $|z|<a$. We can write then the first integral 
in the right hand side of  \eqref{If} as the difference of the corresponding
 one on the whole line and that over 
$\mathcal A_1^c$. We denote the latter by $\mathcal I_n$ to obtain:
\begin{multline}
\label{iac}
\int _{\mathcal A_1}
\e^{-\frac{\,\theta^2}{2}\lambda_n}\,
\Big( 1+\,\sum_{j= 1}^{N_0} \frac{(i\theta/c_n)^j}{j!}\partial_z^{(j)}
\e^{-\frac{\theta^2}{2}\,\lambda_n\,g_1(z)+g_2(z)}
\big\vert_{z=0}\,\Big)d\theta\,=
\\
\int_{\mathbb R}
\e^{-\frac{\,\theta^2}{2}\lambda_n}\,\Big(1+\,
\sum_{j= 1}^{N_0} \frac{(i\theta/c_n)^j}{j!}\partial_z^{(j)}
\e^{-\frac{\theta^2}{2}\,\lambda_n\,g_1(z)+g_2(z)}
\big\vert_{z=0}\,\Big)d\theta\,
- \mathcal I_n.
\end{multline}
Observe that all the integrals above  are finite. 
Moreover, we can compute the integral in $\mathbb R$. Indeed, recall that
 $\lambda_n(1+g_1(z))= \lambda_n/(1-z)$ and $\e^{\,g_2(z)}=\e^{z/2}\sqrt {1-z}$.  
Substitute into the integrand in \eqref{iac}, interchange the derivative
 with the integral and integrate term by term using that 
 \[
\int_{\mathbb R} \e^{-\frac{\,\theta^2}{2}\big(\lambda_n/(1-z)\big)}\,\theta^j\,d\theta=
\begin{cases}\quad  0 &\mbox{ if $j$ is odd }\\
\sqrt {2\,\pi}\,(j-1)!!\,(\frac{1-z}{\lambda_n})^{(j+1)/2}  &\mbox { if $j$ is even}.
\end{cases}
\]
Take, for the given $N>0$, $N_0=2N+1$ to obtain from \eqref{If} and \eqref{iac} 
after renaming the terms in the sum that 
\begin{multline}
\int _{\mathcal A_1}
  \bb E\, \e^{i\theta\,Z(t_n)}\, d\theta=
  \\
  \sqrt {\frac{2\pi}{\lambda_n}}\,\Big( 1+\,
\sum_{\ell= 1}^{N} (-1)^\ell\,\frac{1}{(\lambda_n\,c_n^2)^\ell}\,
\frac{1}{2^\ell\,\ell!}\,
\partial^{(2\ell)}_z\big(\e^{z/2}(1-z)^{\ell+1}\big)\big\vert_{z=0}\Big)
\\
 \label{ppalint}
+\mathcal E_{M+1}^{(1)}+ \mathcal E_{2(N+1)}^{(2)},
\end{multline}
where 
\begin{equation}
\label{defem2}
\mathcal E_{2(N+1)}^{(2)}(n)=\int_{\mathcal A_1}
\e^{-\frac{\,\theta^2}{2}\lambda_n}\,\mathcal{R}^{(4)}_{2(N+1)}(i\theta/c_n)d\theta
- \mathcal I_{n}.
\end{equation}
Observe now that from  \eqref{defla} and Lemma \ref{lemcn}   
\begin{equation}
\nn
\lambda_n\,c_n^2=c_n^2 +\frac 12 =
\sqrt{\frac{2\pi^2}{3}(n-\frac{1}{24})+\frac 14}+\frac 12 +\gamma_n,
\end{equation}
with $|\gamma_n| \le\,C\, n\,{\rm e}^{-2\,\pi\sqrt{24n-1}}$. 
The derivatives in \eqref{ppalint} are easily computed: for $ \ell\ge 1  $,  
\begin{multline}
\label{defCl}
J_\ell:=\partial^{(2\ell)}_x\big(\e^{x/2}(1-x)^{\ell+1}\big)\big\vert_{x=0}\\
=\frac{(\ell+1)!}{4^\ell}\sum_{k=0}^{\ell+1}(-1)^k\, 2^k\,\binom{2\ell}{k}
\frac{1}{(\ell+1-k)!}.  
\end{multline}
Thus,  substitution into \eqref{ppalint}, after expressing $\lambda_n$ in terms of 
$c_n$ and clearing up 
yields the expansion on the right hand side of \eqref{Thint}. From  \eqref{tailint},  
to finish the proof of Theorem \ref{Tint} it suffices to show that: 
\begin{itemize}
\item[a)] It is possible to choose $M>2$ such that $\mathcal E_{N+1}:=
 \mathcal E^{(1)}_{M+1}+\mathcal E^{(2)}_{2(N+1)}$ satisfies \eqref{Eint}. 
\item[b)] $0<D_\ell<2\,3^{\ell-1} \quad \forall \ell$. 
\end{itemize}

{\it Proof of }a)\quad We estimate each of the terms in the last line of
 \eqref{ppalint}, starting with $\mathcal E_{M+1}^{(1)}$,  that was  defined
  in \eqref{defem1}. 
Observe first that if $\theta \in \mathcal A_1$,
\begin{align}
 \label{defL1}
L_1:=&\big|\exp{\{-\frac{\,\theta^2}{2}\,\lambda_n\big(1+g_1(i\theta/c_n)\big)
+g_2(i\theta/c_n)\}} \big|
\\ \nn
=&\exp{\{-\frac{\,\theta^2}{2}\lambda_n\,\Re\big(1+ g_1(i\theta/c_n) \big)
+\,\Re\big( g_2(i\theta/c_n) \big)\}}
  \le  \exp{\{-\frac{\,\theta^2}{4}\}}.  
\end{align}
The  inequality follows from  observing that
 $\lambda_n> 1$  and that, for $\theta \in \mathcal A_1$,   
$|\frac{\theta}{c_n}|<\frac{|\theta|}{\sigma(t_n)(1-t_n)}<a<1$ so from \eqref{defg12} 
 \begin{align}
\nn
&\Re\big(1+g_1(i\theta/c_n)\big)=\frac{1}{1+\theta^2/c_n^2}\,> \frac12,
\\ \nn
&\Re\big(g_2(i\theta/c_n)\big)=\sum_{k\ge1}(-1)^k\big(\theta/c_n)^{2k}\frac{1}{4k}\,< 0.
\end{align}
To estimate the errors that compose $\mathcal E_{N+1}$,  we use the following inequality, that is a direct consequence of 
Proposition \ref{cumulants>2} and \eqref{Ejle}: for each $M$, 
 there is a positive constant $C_{M}$ that may depend on $M$ such that
\begin{equation}
\label{km1}
\frac{\kappa_{M+1}(t_n)}{(M+1)!}\le \frac{C_M}{|\log t_n|^{M+2}}.
\end{equation}
Next, we choose $\alpha \in (1,\frac 32 )$ and decompose
the region of  integration $
\mathcal A_1$ as the disjoint union 
$
\mathcal A_1 =\mathcal A_{1,1}\cup \mathcal A_{1,2}, 
$
for 
\[
\mathcal A_{1,1}=\{\theta: \frac{|\theta|}{\sigma(t_n)}<|\log t_n|^\alpha\}.
\]
For $\theta\in \mathcal A_{1,1} $ we obtain  from \eqref{Rm1} and \eqref{rm3le}, 
with the aid of  \eqref{km1}:
\begin{align}
\nn
\big|\mathcal{R}^{(1)}_{M+1}(\theta,t_n)\big|&\le
\big(\frac{|\theta|}{\sigma(t_n)}\big)^{M+1}\,
 \frac{\kappa_{M+1}(t_n)}{(M+1)!}
  \le C_M\,\frac{ |\log t_n|^{\alpha(M+1)}}{ |\log t_n|^{(M+2)}},
 \\ \nn
\big|\mathcal{R}^{(3)}_{M+1}(\theta,t_n)\big|&\le C\,  \big(\frac{|\theta|}{\sigma(t_n)}\big)^{M+1}
\frac{1}{|\log t_n|^{M+2}}
\le C\,\frac{ |\log t_n|^{\alpha(M+1)}}{ |\log t_n|^{(M+2)}}.
\end{align}
From  these inequalities plus \eqref{rm2le} evaluated at $t=t_n$  we conclude that, given $N>0$, 
  if $\theta \in \mathcal A_{1,1}$, 
we can take $M$ sufficiently large such that there is a constant $\overline{C}>0$
\begin{equation}
\label{rm11}
|\mathcal{R}^{(\ell)}_{M+1}(\theta,t_n)|\le\,\overline{C} |\log t_n|^{N+1},\quad \ell=1,2,3.
\end{equation}
In the case  
 $\theta\in \mathcal A_{1,2} $, again  from \eqref{Rm1} and
  \eqref{rm3le} and with the aid of \eqref{km1} we write
\begin{align}
\nn
|\mathcal{R}^{(1)}_{M+1}(\theta,t_n)|&\le |\theta|^2 a^{M-1}|\log t_n|^{M-1}
\frac{\kappa_{M+1}(t_n)}{(M+1)!\,\sigma^2(t_n)}
\\\label{rm1da2}
&\le |\theta|^2 a^{M-1}\,\frac{C_M}{\sigma^2(t_n)|\log t_n|^3}
\le \beta_1 |\theta|^2,
\\
\label{rm3da2}
|\mathcal{R}^{(3)}_{M+1}(\theta,t_n)|&\le C\,|\theta|^2\,a^{M-1}
\frac{1}{\sigma^2(t_n)|\log t_n|^3}\le \beta_2 |\theta|^2. 
\end{align}
Recall that $\sigma^2(t)=\kappa_2(t)$ and that  from \eqref{sigma2}, 
 $\sigma^2(t_n)|\log t_n|^3 \to\pi^2/3$ as $n\to \infty$, thus  the 
 ${\beta_j}$'s, $j=1,2$ can be taken smaller that $1/10$ for sufficiently large $n$ 
 by choosing $M$ large.
 In addition, for $\theta \in \mathcal A_1$, from  the definition \eqref{defeps} of $\epsilon _n$, 
 from \eqref{defg12} and using the estimate \eqref{Ejle} for $E_2(t_n)$ we conclude that, for each 
 given $N>0 $ there is a positive  constant $C_N$ such that 
 \begin{equation}
 \nn
 |\theta^2\,\epsilon _n\, g_1(i\theta/c_n)|=|\theta^2\frac{E_2(t_n)}{\sigma^2(t_n)} g_1(i\theta/c_n)|
 \le E_2(t_n)|\log t_n|^2\le C_N|\log t_n|^{N +1}.
 \end{equation}
Then, for $n$ large enough, from \eqref{rm11}, \eqref{rm1da2}, \eqref{rm3da2}, \eqref{rm2le},
 the last inequality above and \eqref{defL1} 
we estimate $\mathcal E_{M+1}^{(1)}$ as follows:
\begin{align}
\nn
|\mathcal E_{M+1}^{(1)}|\le &\int\limits_{\mathcal A_{1,1}}
 L_1
\times\,\big|\exp{\{
\frac{\,\theta^2}{2}\,\epsilon_n\,g_1(i\theta/c_n)+\sum^3_{\ell=1} 
\mathcal{R}^{(\ell)}_{M+1}
\}}-1\,\big|\, d\theta
\\ \nn
&+ \int\limits_{\mathcal A_{1,2}}
 L_1
\times\,\big|\exp{\{
\frac{\,\theta^2}{2}\,\epsilon_n\,g_1(i\theta/c_n)+\sum^3_{\ell=1} 
\mathcal{R}^{(\ell)}_{M+1}
\}}-1\,\big|\, d\theta 
\\ \nn
 \le &\int\limits_{\mathcal A_{1,1}}
\e^{-\frac{\theta^2}{4}}\,\big|
\frac{\,\theta^2}{2}\,\epsilon_n\,g_1(i\theta/c_n)+\sum^3_{\ell=1} 
\mathcal{R}^{(\ell)}_{M+1}
\}\big|\, d\theta 
\\ \nn
&+\int\limits_{\mathcal A_{1,2}} \e^{-\frac{\theta^2}{4}}\,
\big(\e^{\frac{\theta^2}{5}+\frac{1}{10}}+1\big)\,d\theta
\\ \nn
 \le & (C_N+\overline C)\,|\log t_n|^{N+1}\! \int\limits_{\mathcal A_{1,1}}
\e^{-\frac{\theta^2}{4}}\,d\theta 
\\ \nn
&+ \e^{-\frac{\sigma^2(t_n)\,|\log t_n|^{2\alpha}}{40}}\!
\int\limits_{\mathcal A_{1,2}} \e^{-\frac{9\,\theta^2}{40}}\big(\e^{\frac{\theta^2}{5}+\frac{1}{10}}+1\big)
d\theta.
\end{align}
Since the above integrals are finite and  $\sigma^2(t_n)|\log t_n|^{2\alpha}=O(|\log t_n|^{2\alpha-3})$
 with  $\alpha <3/2$,  
we conclude with the aid of \eqref{ltnrn}   that, given $N>0$, there exists $M>2$ such that, 
for sufficiently large $n$ there are positive constants $\overline  {C}_j$, $j=1,2$  such that  
 \begin{equation}
 \label{estEM1}
 |\mathcal E_{M+1}^{(1)}|\le \overline {C}_1|\log t_n|^{N+1}\le \overline C_2\, n^{-(\frac{N+1}{2})}.
 \end{equation}
 It remains to consider $\mathcal E^{(2)}_{2(N+1)}$,  defined in \eqref{defem2}.
 
 For $N> 0$, from \eqref{r4N}, \eqref{remder}  and the remark below,
  using  \eqref{defL1}  we obtain
 \begin{equation}
 \nn
 \e^{-\frac{\theta^2}{2}\,\lambda_n} \big|\mathcal R_{2(N+1)}^{(4)}(\frac{i\theta}{c_n})\big|\le \frac{1}{c_n^{2(N+1)}}\,
 \frac{|\theta|^{2(N+1)}}{(2N+2)!}\,\e^{-\frac{\theta^2}{4}}\,|P_{2(N+1),\xi}(\lambda_n\theta^2)|.
 \end{equation}
It follows then that, for some polynomial $Q$  with bounded coefficients
 (depending on the derivatives of $g_1$ and $g_2$ over  $[-a,a]$), 
\begin{equation}
\nn
 \int_{\mathcal A_1}
 \e^{-\frac{\theta^2}{2}\,\lambda_n}\big|\mathcal R_{2(N+1)}^{(4)}(\frac{i\theta}{c_n})\big|\,d\theta \le
 \frac{1}{c_n^{2(N+1)}}\,
 \int_{\mathcal A_1}
 \e^{-\frac{\theta^2}{4}}\,\big|Q(\theta^2)\big|d\theta
 \end{equation}
 and, for some constant $C_I>0$, 
 \begin{multline}
 \nn
|\mathcal I_n|=\big|\int _{\mathcal A_1^c}
\e^{-\frac{\,\theta^2}{2}\lambda_n}\,\Big(1+\,
\sum_{j= 1}^{2N} \frac{(i\theta/c_n)^j}{j!}\partial_z^{(j)}
\e^{-\frac{\theta^2}{2}\,\lambda_n\,g_1(z)+g_2(z)}
\big\vert_{z=0}\,\Big) d\theta\,\big|
\\
\le \int _{\mathcal A_1^c}
\e^{-\frac{\,\theta^2}{2}\lambda_n}\,\Big|1+\,
\sum_{j= 1}^{2N} \frac{(i\theta/c_n)^j}{j!} P_{j,0}(\lambda_n\theta^2)\Big| d\theta\,
\le C_I\,\e^{-\frac{\sigma^2\,(t_n)\,|\log t_n|^2}{4}}. 
 \end{multline}
Since $\sigma^2\,(t_n)\,|\log t_n|^2 =O(|\log t_n|^{-1})$, it  follows from the above 
estimates that given $N>0$, 
\[
|\mathcal E^{(2)}_{2(N+1)}|=O(c_n^{-2(N+1)})=O(n^{-(N+1)/2})
\]
 what concludes the proof of a).
 
 \smallskip 
{\it Proof of }b)\quad
We proceed to  show that the  ${J_\ell}$'s defined in \eqref{defCl}   alternate sign. 
Define,  for each fixed $\ell$,  
$a_k=2^k\binom{2\ell}{k}\frac{1}{(\ell+1-k)!}$, 
and $d_k=a_k-a_{k+1}$. It is easy to verify that $d_k\le 0$ for any $\ell\ge 1$ and 
$k\le\ell$ and then,
\begin{equation}
\nn
J_\ell =\begin{cases}
\frac{(\ell+1)!}{4^\ell}
\sum_{\overset{k\ge 0,}{k \text { even}}}\limits^{\ell}d_k &\mbox{ if  $\ell$ is even,} 
\\
\phantom{}
\\
\frac{(\ell+1)!}{4^\ell}
\big(\frac{1}{(\ell+1)!}+\sum_{\overset{k\ge 1,}{ k\text { odd}}}\limits^{\ell} -d_k\big)
&\mbox{  if  $\ell$ is odd.}
\end{cases} 
\end{equation}
In particular, the last formula implies that  $(-1)^\ell J_\ell\le 0$,
 and from the identity in \eqref{defdl}, we have  
 \begin{equation}
 \label{ClDl}
 D_\ell=(-1)^{\ell+1}\frac{J_\ell}{\ell!},
 \end{equation}
 thus $D_\ell >0$. 
Let us show now that $D_\ell<2\,3^{\ell-1}$. 
 Since $d_k \le 0$  it follows  that if $\ell$ is odd,  
\[
0\le J_\ell = \frac{(\ell+1)!}{4^\ell}\big(\sum _{k=0}^\ell (-1)^k\,a_k +a_{\ell+1}\big)
\le 
 \frac{a_{\ell+1}(\ell+1)!}{4^\ell}=\frac{2^{\ell+1} (2\ell)!}{4^\ell(\ell-1)!},
 \]
and if $\ell$ is even
\[
0\ge J_\ell = \frac{(\ell+1)!}{4^\ell}\big(\sum _{k=0}^\ell (-1)^k\,a_k -a_{\ell+1}\big)\ge 
 -\frac{a_{\ell+1}(\ell+1)!}{4^\ell}=-\frac{2^{\ell+1} (2\ell)!}{4^\ell(\ell-1)!},
 \]
which is summarized as 
\[
|J_\ell|\le \frac{(2\ell)!}{2^{\ell-1}(\ell-1)!},
\] 
and from \eqref{ClDl}
\[
D_\ell\le \frac{(2\ell)!}{2^{\ell-1}(\ell-1)!\ell!}=2\,\frac{(2\ell-1)!!}{(\ell-1)!}\le 2\,3^{\ell-1}, 
\]
what concludes the proof of b), and that of Theorem \ref{Tint}.
\end{proof}

\begin{remark}
The inequalities \eqref{Dele} in particular imply that the sum in \eqref{Thint}
 is convergent (as $N\to \infty$)
 for any  $n$ such that $c_n^2>1$, which is the case for any $n\ge 1$,
  as follows for instance from \eqref{estdn}. 
  However, \eqref{Thint} is an asymptotic expansion, and  has to be interpreted as such:
   for fixed $N$, it approximates the integral on the left hand side 
  for $n$ large, with an error satisfying \eqref{Eint}. 
  \end {remark}
  \section{Proof of the main results  and a table} 
  \label{finale}
  We present here the proofs of Theorem \ref{Thpp} and  Proposition \ref{propalt}. 
  The expansion obtained in this last result is computed for several values of $n$, 
  taking $N=17$, and compared with the true value of $p(n)$, as reported in Table \ref{table}. 
\begin{proof} [Proof of Theorem \ref{Thpp}]
 From \eqref{pn} and Corollary \ref{Corfactor}, 
we know that   
\begin{align}
\label{mult}
p(n)=&\frac{|\log t_n|^{1/2}}{(2\pi)^{3/2}\,\sigma(t_n)}\,
{\rm e}^{\sqrt{\frac {2\pi^2}{3}\,(n-\frac{1}{24})+\frac 14}\,+
 \beta_n}\,
 \int^{\pi\sigma(t_n)}_{-\pi\sigma(t_n)}
\bb E\, \e^{i\theta\,Z(t_n)}\, d\theta 
\\ \nn
=&
\frac{|\log t_n|^{1/2}}{(2\pi)^{3/2}\,\sigma(t_n)}\,
{\rm e}^{\sqrt{\frac {2\pi^2}{3}\,(n-\frac{1}{24})+\frac 14}}\,
 \int^{\pi\sigma(t_n)}_{-\pi\sigma(t_n)}
\bb E\, \e^{i\theta\,Z(t_n)}\, d\theta
\\ \nn
&+  \big({\rm e}^{\beta_n}-1\big)\,\frac{|\log t_n|^{1/2}}{(2\pi)^{3/2}\,\sigma(t_n)}\,
{\rm e}^{\sqrt{\frac {2\pi^2}{3}\,(n-\frac{1}{24})+\frac 14}}\,
 \int^{\pi\sigma(t_n)}_{-\pi\sigma(t_n)}
\bb E\, \e^{i\theta\,Z(t_n)}\, d\theta.
 \end{align}
 \nn
 Let us denote by $\mathcal Q_n$ the last line above. 
For $n$ sufficiently large we can estimate  with  the aid of \eqref{betale}, after recalling that
 $|\bb E\, \e^{i\theta\,Z(t_n)}|\le 1$ and  
that $|\log t_n|\to 0 $ as $n \to \infty$,   
\begin{align}
 |\mathcal Q_n|&\le \big|{\rm e}^{\beta_n}-1\big|\, \frac{|\log t_n|^{1/2}}{(2\pi)^{1/2}}
 {\rm e}^{\sqrt{\frac {2\pi^2}{3}\,(n-\frac{1}{24})+\frac 14}}
 \\ \nn
 &\le {\rm e}^{-2\pi\sqrt{24\,n-1}} \,
  {\rm e}^{\sqrt{\frac {2\pi^2}{3}\,(n-\frac{1}{24})+\frac 14}}
  \le {\rm e}^{-\,(2\pi-1)\sqrt{24\,n-1}}.
 \end{align}
Substituting the integral by the expansion obtained in Theorem \ref{Tint}  we 
have that the second line in \eqref{mult}  equals 
\begin{equation}
\nn
\frac{|\log t_n|^{1/2}}{(2\pi)^{3/2}\sigma(t_n)}\,{\rm e}^{\sqrt{\frac {2\pi^2}{3}
\,(n-\frac{1}{24})+\frac 14}}\,
\Big(\sqrt{2\pi}\,\sqrt{\frac{2\,c_n^2}{1+2\,c_n^2}}
\big(1-\sum_{\ell= 1}^N\,\frac{D_\ell}{(1+2\,c_n^2)^\ell }\big)
+\mathcal E_{N+1}\Big).
\end{equation} 
Thus, \eqref{asymexp}  follows after substituting the last expression into 
\eqref{mult}, collecting terms after expressing $\sigma(t_n)=c_n/|\log t_n|$   
from \eqref{defcn} and denoting by   $\widetilde{\mathcal R}_{N+1}$
 the terms coming from  $\mathcal E_{N+1}$
and  $\mathcal Q_n$, that are directly seen to have the required order. Observe that the coefficients 
 $D_\ell$ are those computed in Theorem \ref{Tint}, and the stated inequalities are precisely 
 \eqref{Dele}. 
\end{proof}
\begin{proof}
[Proof of Proposition \ref{propalt}] 
For each $N>0$, let us estimate the difference $\Lambda_N$  of the sums in \eqref{asymexpa} and \eqref{asymexp}
as follows: 
\begin{multline}
\nn
\big|\Lambda_N\big|=\big|\sum _{\ell=1}^N\frac{D_\ell}{(1+2\,c_n^2)^\ell}-
\frac{D_\ell}{(1+2\,r_n)^\ell}\big|
=\big|\sum _{\ell=1}^N\frac{D_\ell}{(1+2\,r_n)^\ell}\{1-\big(\frac{1+2\,r_n}{1+2\,c_n^2}\big)^\ell\}\big|
\\
\le |1-q_n|\sum _{\ell=1}^N\frac{D_\ell}{(1+2\,r_n)^\ell}\{1+q_n+q_n^2\cdots+q_n^{\ell-1}\}\le 
|1-q_n|\, \sum _{\ell=1}^N\big(\frac{1}{4}\big)^\ell\,\ell\, D_\ell
\\
\le 2\,|c_n^2-r_n| \,\sum _{\ell=1}^\infty(\frac 14 )^\ell\ell \,D_\ell \le C_1\, 
 n\,{\rm e}^{-2\,\pi\sqrt{24n-1}},
\end{multline}
where we have denoted $q_n=\frac{1+2\,r_n}{1+2\,c_n^2}$ and  used that $1+2r_n> 6$ for all $n$;
we have also assumed  that $n$ is large enough 
so that $q_n\le \frac 32$ (recall $q_n\to 1$) so the series is convergent,
as can be seen from \eqref{Dele}.    The last inequality is a consequence of
    \eqref{estdn}, with $C_1$  a positive constant. 
    
To estimate the difference $\Gamma_n$ of the factors in \eqref{asymexpa} and \eqref{asymexp}, 
observe first  that, after simple computations using \eqref{defcn} and  \eqref{sigma2a}
we have 
\[
\frac{|\log t_n|^{3/2}}{\sqrt 2 \, \pi(1+2\,c_n^2)^{1/2}}
=\frac{\sqrt 3\,|\log t_n|^2}{2\pi^2\sqrt{1+\rho_n}},\mbox{ with } \rho_n=\frac{3|\log t_n|^3\,E_2(t_n)}{\pi^2}.
\]
Thus, from \eqref{ltnrn}, and estimating $\rho_n$ with the aid of \eqref{defE2},
\eqref{cjcj} and
\eqref{e4pi}, we obtain that, for $C_3$ a positive constant,  
\begin{multline}
\nn
|\Gamma_n|=\big|\frac{|\log t_n|^{3/2}}{\sqrt 2 \, \pi\,(1+2\,c_n^2)^{1/2}}-\frac{2\pi^2}{3\sqrt 3 (1+2 r_n)^2}\big|=
\frac{\sqrt 3}{2\pi^2}\,\big|\frac{|\log t_n|^{2}}{\sqrt{1+\rho_n}}-(\frac{2\pi^2}{3\, (1+2 r_n)})^2\big|
\\ 
\le \frac{\sqrt 3}{2\pi^2}\,|\log t_n|^2\big|\frac{1}{\sqrt{1+\rho_n}}-1\big|+\frac{\sqrt 3}{2\pi^2}\,
\big|\,|\log t_n|-\frac{2\pi^2}{3\, (1+2 r_n)}  \big|
\\
\times\big|\,|\log t_n|+\frac{2\pi^2}{3\, (1+2 r_n)}\big|
\\
\le C_2  |\log t_n|^2\,|\rho_n|
+C \,{\rm e}^{-2\,\pi\sqrt{24n-1}} \le C_3 \,{\rm e}^{-2\,\pi\sqrt{24n-1}}. 
\end{multline}
Let us then  add and substract
$\frac{2\pi^2\,{\rm e}^{r_n}}{3\sqrt 3 (1+2 r_n)^2}
\big(1-\sum _{\ell=1}^N\frac{D_\ell}{(1+2\,r_n)^\ell}\big)$
to the expression \eqref{asymexp} for $p(n)$  to obtain
\begin{multline}
\nn
p(n)=\frac{2\pi^2\, {\rm e}^{r_n} }{3\sqrt 3 (1+2 r_n)^2}
\Big(1-\sum _{\ell=1}^N\frac{D_\ell}{(1+2\,r_n)^\ell} +\frac{3\sqrt 3 (1+2 r_n)^2}{2\pi^2}\times 
\\
\big(\frac{|\log t_n|^{3/2}}{\sqrt 2 \, \pi\,(1+2\,c_n^2)^{1/2}}\widetilde{\mathcal R}_{N+1}+
\Gamma_n\,\big(1-\sum _{\ell=1}^N\frac{D_\ell}{(1+2\,c_n^2)^\ell}\big)+ 
\frac{2\pi^2}{3\sqrt 3 (1+2 r_n)^2}\,
\Lambda_N\big)\Big).
\end{multline} 
To conclude, call $\mathcal R_{N+1}$ the expression following the first  sum above,  and observe that,
from  the above estimates for $\Lambda_N$ and $\Gamma_n$, it is easy to conclude
 that it has the same order as 
 $\widetilde{\mathcal R}_{N+1}$. \end{proof}
\begin{table}[h!]
   \resizebox{\textwidth}{!}{
   \begin{tabular}{|c|r|r|l|}
   \hline\noalign{\smallskip}
\multicolumn{1}{|c|}
 {$\mathbf{n}$} &  \multicolumn{1}{|c|}
  {$\mathbf{p(n)}$} & 
   \multicolumn{1}{|c|}{$\mathbf{\bar p(n)}$  ($N=17$)}&
    \multicolumn{1}{|c|}{$\mathbf{\bar p(n)/p(n)}$ }
  \\
  \hline
  10   & $42$ 							& 42 						& 1 \\
  11   & $56$ 							& 57 						& 1.0178571428571428571  \\ \hline
  50   & $204 \,226$ 					& $204 \,211$ 					& 0.9999265519571455152 \\
  51   & $239 \,943$ 					& $239 \,959$ 					& 1.0000666825037613100 \\ \hline
  100 & $190 \,569 \,292$				& $190 \,568 \,945$ 			& 0.9999981791400054107 \\
  101 & $214 \,481 \,126$				& $214 \,481 \,499$ 			& 1.0000017390807618196 \\ \hline
  200 & $3 \,972 \,999 \,029 \,388$ 		& $3 \,972 \,998 \,993 \,186$ 		& 0.9999999908879917331 \\ 
  201 & $4 \,328 \,363 \,658 \,647$ 		& $4 \,328 \,363 \,696 \,288$ 		& 1.0000000086963580162\\ \hline
  500 & $2 \,300 \,165 \,032 \,574 \,323 \,995 \,027$ & $2 \,300 \,165 \,032 \,573 \,762 \,997 \,377$ & 0.9999999999997561054\\ \hline
  600 & $458 \,004 \,788 \,008 \,144 \,308 \,553 \,622$ & $458 \,004 \,788 \,008 \,137 \,064 \,138 \,753$ & 0.9999999999999841826 \\ \hline
  700 & $60 \,378 \,285 \,202 \,834 \,474 \,611 \,028 \,659$ & $60 \,378 \,285 \,202 \,834 \,397 \,465 \,935 \,949$ & 0.9999999999999987223\\ \hline
  800 & $5 \,733 \,052 \,172 \,321 \,422 \,504 \,456 \,911 \,979$ & $5 \,733 \,052 \,172 \,321 \,421 \,800 \,242 \,439 \,308$ & 0.9999999999999998772\\ \hline
   900 & $415 \,873 \,681 \,190 \,459 \,054 \,784 \,114 \,365 \,430$ & $415 \,873 \,681 \,190 \,459 \,049 \,122 \,378 \,030 \,945$ & 0.9999999999999999863\\ \hline
  1000 & $24 \,061 \,467 \,864 \,032 \,622 \,473 \,692 \,149 \,727 \,991$ & $24 \,061 \,467 \,864 \,032 \,622 \,432 \,794 \,750 \,374 \,387$ & 0.9999999999999999983 \\
  \hline
\end{tabular} 
} 
 \caption{
 {\footnotesize Comparison of $\bar p(n)$ (the nearest integer to  the sum in 
 \eqref{asymexpa} with $N=17$) and the true value of $p(n)$. Computations made with the help of 
 Mathematica 9.0,  {Wolfram Research{,} Inc.}}}\label{table}
\end{table}

\appendix

\section {Formulae for higher order cumulants}
\label{sec:A}
In the next lemma, we derive a couple of explicit expressions for each
 cumulant $\kappa_j(t)$, in terms of 
the Eulerian polynomials $A_j(t)$, that are defined through the following identity
\begin{equation}
\label{defpEu}
\sum _{k\ge 0} k^j\,t^k =\frac{A_j(t)}{(1-t)^{j+1}}.
\end{equation}
The first  four  are: 
\begin{equation}
\label{pEu}
A_0(t)=1,\quad A_1(t)=t,\quad A_2(t)=t+t^2,\quad A_3(t)=t+4t^2+t^3.
\end{equation}
More details can be found in L. Comtet's book  \cite{com}. 
\begin{lemma}
\label{lemacum} The cumulants $\kappa_j(t)$ satisfy 
\begin{align}
\label{f1kapa}
 \kappa_j(t) \; &=  
 \; \sum_{\ell \geq 1} \,\frac{\ell^{j-1} \,A_{j}(t^\ell)}{(1-t^\ell)^{j+1}}  &j\ge 1,
 \\ \label{f2kapa}
 \kappa_j(t)\; &= \sum_{\ell \geq 1} \,\frac{\ell^j\,A_{j-1}(t^\ell)}{(1-t^\ell)^j}   &j\ge 2.
\end{align}
The above series are absolutely  convergent for $|t|<1$. 
\end{lemma}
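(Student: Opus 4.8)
The plan is to bypass the probabilistic interpretation entirely and work with the analytic functions on the unit disc, exploiting that the operator $t\partial_t$ acts diagonally on monomials. The starting point is the identity of analytic functions on $|t|<1$,
\[
\log f(t)=\sum_{\ell\ge 1}\frac{t^\ell}{\ell\,(1-t^\ell)}=\sum_{\ell\ge 1}\sum_{k\ge 1}\frac{t^{\ell k}}{\ell},
\]
together with the recurrence \eqref{rcum1j}, which iterates to $\kappa_j(t)=(t\partial_t)^{j}\log f(t)$. Using the elementary fact that $(t\partial_t)^m t^{M}=M^m\,t^M$ and the absolute convergence of the double series on compact subsets of the disc (so that term-by-term application of $t\partial_t$ is legitimate), I would obtain
\[
\kappa_j(t)=\sum_{\ell\ge 1}\sum_{k\ge 1}\frac{(\ell k)^{j}}{\ell}\,t^{\ell k}
=\sum_{\ell\ge 1}\ell^{\,j-1}\sum_{k\ge 1}k^{\,j}\,t^{\ell k},\qquad j\ge 1.
\]

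To finish I would recognise the inner sums through the defining identity \eqref{defpEu} of the Eulerian polynomials. Since $j\ge 1$, the $k=0$ term in \eqref{defpEu} contributes nothing, so replacing $t$ by $t^\ell$ there gives $\sum_{k\ge 1}k^{\,j}t^{\ell k}=A_j(t^\ell)/(1-t^\ell)^{j+1}$, and \eqref{f1kapa} follows immediately. For \eqref{f2kapa} I would perform one fewer differentiation, starting instead from the closed form already recorded in \eqref{cum12}, namely $\kappa_1(t)=\sum_{\ell\ge1}\frac{\ell\,t^\ell}{1-t^\ell}=\sum_{\ell\ge1}\sum_{m\ge1}\ell\,t^{\ell m}$; applying $(t\partial_t)^{j-1}$ term by term yields $\kappa_j(t)=\sum_{\ell\ge1}\ell^{\,j}\sum_{m\ge1}m^{\,j-1}t^{\ell m}$, and \eqref{defpEu} with $t\mapsto t^\ell$ gives $\sum_{m\ge1}m^{\,j-1}t^{\ell m}=A_{j-1}(t^\ell)/(1-t^\ell)^{j}$ — but only when $j-1\ge 1$, since for $j=1$ the $m=0$ term would have to be included; this is precisely why \eqref{f2kapa} is asserted only for $j\ge 2$.

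The last assertion of the lemma, absolute convergence for $|t|<1$, comes out as a by-product: fixing $\rho=|t|<1$, the coefficients of $A_j$ are nonnegative so $|A_j(t^\ell)|\le A_j(1)$, while $|1-t^\ell|\ge 1-\rho$, whence each series in \eqref{f1kapa} and \eqref{f2kapa} is dominated by a constant times $\sum_{\ell\ge1}\ell^{\,j}\rho^{\ell}<\infty$; this simultaneously justifies the term-by-term manipulations used above. I do not anticipate a genuine obstacle here — the only delicate point is the bookkeeping of whether the $k=0$ (respectively $m=0$) term may be absorbed into \eqref{defpEu}, which governs the two different ranges of $j$; as a sanity check one verifies, using $A_1(t)=t$ and $A_2(t)=t+t^2$ from \eqref{pEu}, that \eqref{f1kapa} reproduces the expressions for $\kappa_1(t)$ and $\kappa_2(t)$ in \eqref{cum12a} and that \eqref{f2kapa} reproduces those in \eqref{cum12}.
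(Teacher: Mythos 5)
Your argument is correct, and it reaches both identities by a route that differs in an instructive way from the paper's. The paper proves \eqref{f1kapa} by induction on $j$: it checks the base case $j=1$ against \eqref{cum12a} and then verifies that $t\partial_t$ carries the $j$-th closed form into the $(j+1)$-st by differentiating the rational functions $A_j(t^\ell)/(1-t^\ell)^{j+1}$ directly and invoking the derivative recurrence $A_{j+1}(t)=t(1-t)A_j'(t)+(j+1)tA_j(t)$ for the Eulerian polynomials (equation \eqref{recEP}, quoted from Comtet); it then obtains \eqref{f2kapa} from \eqref{f1kapa} by expanding each summand via \eqref{defpEu} and interchanging the two summation indices. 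You instead push the operator $(t\partial_t)^j$ all the way down to the double series of monomials $\sum_{\ell,k} t^{\ell k}/\ell$, where it acts diagonally, and only at the end repackage the inner sums using the defining identity \eqref{defpEu}; you also get \eqref{f2kapa} independently from the other closed form of $\kappa_1$ rather than as a corollary of \eqref{f1kapa}. What your version buys is self-containedness: the recurrence \eqref{recEP} never enters (in effect you re-derive its consequence by differentiating $\sum_k k^jx^k$ term by term), and the dichotomy between the ranges $j\ge1$ and $j\ge2$ is explained transparently by whether the $k=0$ (resp.\ $m=0$) term of \eqref{defpEu} vanishes. What the paper's version buys is that every intermediate object is already in closed rational form, so the only analytic issue is the convergence of the final series, whereas you must additionally justify termwise differentiation of a double series --- which your domination estimate $|A_j(t^\ell)|\le A_j(1)|t|^\ell$, $|1-t^\ell|\ge 1-|t|$ does handle, at essentially the same level of rigour as the paper's closing remark on convergence. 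Your sanity checks against \eqref{cum12} and \eqref{cum12a} are consistent with both derivations.
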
  
 \begin{proof} Denote by $S_j(t)$ the right hand side of \eqref{f1kapa}. 
In \eqref{cum12a} we have seen that   $\kappa_1(t)=\sum_{\ell\ge1}\frac{t^\ell}{(1-t^\ell)^2}$, which equals $S_1(t)$ by \eqref{pEu};
 that proves \eqref{f1kapa}
for $j=1$. To conclude,  from \eqref{rcum1j} it suffices to show that 
\[
t\partial_t S_j(t)=S_{j+1}(t) \mbox { if }  j\ge 1.
\]
Using a  well known recurrence relation  for the Eulerian polynomials
that can be found for instance in  \cite[p.292]{com}
\begin{equation}
\label{recEP}
A_{j+1}(t)=t\,(1-t)\,A'_j(t)+(j+1)\,t\,A_j(t),
\end{equation}
 we compute
\begin{align}
\nn
t\partial_t\,\frac{\ell^{j-1}\,A_{j}(t^\ell)}{(1-t^\ell)^{j+1}}&=\frac{\ell^j}{(1-t^\ell)^{j+2}}\,
\big(t^\ell(1-t^\ell)\,A_j'(t^\ell)\,+(j+1)\, t^\ell \,A_j(t^\ell)\big)
\\ \nn
&=\frac{\ell^j}{(1-t^\ell)^{j+2}}\,A_{j+1}(t^\ell).
\end{align}
Summing up in $\ell$  the last expression we obtain $S_{j+1}$, and conclude
 the proof of \eqref{f1kapa}. 
To prove \eqref{f2kapa}, observe that, from \eqref{f1kapa} and
  \eqref{defpEu}, if $j\ge2$, 
\begin{multline}
\nn
\kappa_j(t)=\sum_{\ell \geq 1} \,\frac{\ell^{j-1} \,A_{j}(t^\ell)}{(1-t^\ell)^{j+1}} 
=\sum_{\ell \geq 1} \ell^{j-1}\sum_{m\ge 0}m^j\,t^{\ell m}\\
=\sum_{m\ge 0}\sum_{\ell \geq 1}m^jt^{\ell m} \ell^{j-1}=
\sum_{m\ge 0}m^j\frac{A_{j-1}(t^m)}{(1-t^m)^j}.
\end{multline}
The convergence of the series in \eqref{f1kapa} and \eqref{f2kapa} 
is clear from the fact that the polynomials
$A_j(0)=0$ for any $j\ge 1$.  
\end{proof}
The next result is a consequence of Lemma \ref{lemacum} and a  reasoning similar to that 
leading to  prove Lemma \ref{cumulants}: from the recurrence \eqref{rcum1j}
 we derive functional equations
for $\kappa_j$ for $j>2$, that yield  asymptotic formulae as $t\uparrow 1$  
for those. 
The precise statement is given next.  
\begin{proposition}
\label{cumulants>2}
The cumulants $\kappa_j(t)$, $j\ge 2$ satisfy the following functional equations
\begin{equation}
\label{kj}
\kappa_j(t)=\frac{\pi^2\,j!}{{6\,|\log t|^{j+1}}}-\frac{(j-1)!}{2\,|\log t|^j} +E_j(t),
\end{equation}
where the  terms $E_j(t)$ are given by the following expression
\begin{equation}
\label{fEj}
 E_j(t) = \frac{(j-1)!}{|\log t|^j} \sum_{r=1}^j \binom{j}{r}
 \Big( \frac{-4 \pi^2}{|\log t|} \Big)^r \frac{\kappa_r 
 ({\rm e}^{-\frac{4\pi^2}{|\log t|}} )}{(r-1)!}, \quad  j\ge 2,
\end{equation}
and satisfy
\begin{equation}
\label{aEj}
E_j(t)\asymp {\rm e}^{-\frac{4\pi^2}{|\log t|}}.
\end{equation}
\end{proposition}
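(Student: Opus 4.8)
The plan is to mimic the derivation of Lemma~\ref{cumulants} and Corollary~\ref{cormsigma}: iterate the recurrence $\kappa_{j+1}(t)=t\partial_t\kappa_j(t)$ of \eqref{rcum1j}, starting this time from the functional formula \eqref{logft1} for $\log f$ rather than from \eqref{m(t)}. Since $\kappa_j(t)=(t\partial_t)^j\log f(t)$, I would apply $(t\partial_t)^j$ to \eqref{logft1}. Writing $v:=|\log t|$, the identity \eqref{fH} gives $(t\partial_t)^m\,v^{-k}=\frac{(k+m-1)!}{(k-1)!}\,v^{-(k+m)}$ and $t\partial_t\log v=-v^{-1}$, so for $j\ge2$ the constant $-\log\sqrt{2\pi}$ and the term $-\tfrac1{24}v$ are annihilated, the term $\tfrac{\pi^2}{6v}$ produces $\tfrac{\pi^2 j!}{6\,v^{j+1}}$, and the term $\tfrac12\log v$ produces $-\tfrac{(j-1)!}{2\,v^{j}}$. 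This is exactly the explicit part of \eqref{kj}, the remaining term being $E_j(t)=(t\partial_t)^j E_0(t)$, where $E_0(t)=\log f({\rm e}^{-4\pi^2/v})$.

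It remains to put $(t\partial_t)^j E_0$ into the closed form \eqref{fEj}, which I would do by induction on $j$. The base case is $j=1$, matching \eqref{defE1} (the case $j=2$ is \eqref{defE2}, which is the $j=2$ instance of \eqref{fEj} and is already contained in Lemma~\ref{cumulants}). The engine of the induction is the chain-rule identity
\[
t\partial_t\big[\kappa_r({\rm e}^{-4\pi^2/v})\big]=-\frac{4\pi^2}{v^{2}}\,\kappa_{r+1}({\rm e}^{-4\pi^2/v}),\qquad r\ge1,
\]
obtained by setting $u={\rm e}^{-4\pi^2/v}$, observing $\partial_v u=\tfrac{4\pi^2}{v^2}\,u$, and invoking $\kappa_{r+1}(u)=u\partial_u\kappa_r(u)$ from \eqref{rcum1j}. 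Applying $t\partial_t$ to \eqref{fEj} at level $j$, the derivative of the prefactor $v^{-(j+r)}$ and the derivative of $\kappa_r({\rm e}^{-4\pi^2/v})$ each produce a family of terms; after re-indexing the second family by $r\mapsto r-1$, the two families combine into the binomial coefficients of \eqref{fEj} at level $j+1$, thanks to the elementary identities $\binom{j+1}{r}=\binom{j}{r}+\binom{j}{r-1}$ and $r\binom{j}{r}=(j+1-r)\binom{j}{r-1}$. This bookkeeping is the only genuinely delicate step; everything else is routine differentiation.

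Finally, for the logarithmic asymptotics \eqref{aEj} I would use Lemma~\ref{lemacum}: from the absolutely convergent, positive-term series \eqref{f1kapa}, and the facts $A_r(0)=0$ and $A_r(x)=x+O(x^2)$ (cf.\ \eqref{pEu}), one gets for every $r\ge1$ constants $0<c_r\le C_r$ with $c_r\,u\le\kappa_r(u)\le C_r\,u$ for $u\in(0,\tfrac12]$ — the lower bound from the first term of the series, the upper bound from dominating $A_r$ by a multiple of its linear term on $[0,1]$ and summing (or by the mean value theorem, as in the proof of \eqref{cjcj}). Substituting these two-sided bounds into \eqref{fEj} with $u={\rm e}^{-4\pi^2/v}$, the term $r=j$ dominates as $v\to0$ because it carries the strictly largest negative power $v^{-2j}$; in particular the alternating signs in the sum over $r$ cannot cause leading-order cancellation, and $E_j(t)=(-4\pi^2)^j\,v^{-2j}\,\kappa_j(u)\,(1+o(1))$. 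Hence $\log|E_j(t)|=-\tfrac{4\pi^2}{v}-2j\log v+O(1)$, so $v\,\log|E_j(t)|\to-4\pi^2$, which is \eqref{aEj} in the sense of Definition~\ref{defasym}. I expect the binomial bookkeeping in the induction for \eqref{fEj} to be the main obstacle; the asymptotic statement is a short computation once the positivity and linear-order bounds on $\kappa_r(u)$ are available.
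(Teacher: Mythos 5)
Your proposal is correct and follows essentially the same route as the paper: iterate $t\partial_t$ on the functional equation for $\log f$ (the paper starts the iteration from $\kappa_2$ in \eqref{sigma2}, which amounts to the same thing), verify \eqref{fEj} by induction via the recurrence $E_{j+1}=t\partial_t E_j$, and deduce \eqref{aEj} from two-sided bounds $c_r u\le\kappa_r(u)\le C_r u$ obtained exactly as in the paper (first term of the series \eqref{f1kapa} for the lower bound, mean value theorem for the upper). Your explicit binomial bookkeeping checks out (the needed identity reduces to $r\binom{j}{r}=(j+1-r)\binom{j}{r-1}$), and your observation that the $r=j$ term dominates, ruling out cancellation among the alternating signs, is a point the paper leaves implicit but is indeed what makes \eqref{aEj} go through.
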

\begin{proof} Recall that  the $\kappa_j(t)$ are obtained from the
 recurrence \eqref{rcum1j}.
We already know  from Lemma \ref{cumulants} and
  Corollary \ref{cormsigma} that the proposition holds for $j=2$. 
Let us start from $\kappa_2(t)$ as given in \eqref{sigma2}, and denote 
$(t\partial_t)^{(k)}$ the $k$-th iteration of the operator $t\partial_t$.  
Use \eqref{fH} with $H$ the first two  terms
  in the right hand side of \eqref{sigma2}
to obtain  directly by induction that  
 for $j>2$, 
\[
(t\partial_t)^{(j-2)}(\frac{\pi^2}{3|\log t|^3}-\frac{1}{2|\log t|^2})=
\frac{\pi^2\,j!}{{6\,|\log t|^{j+1}}}-\frac{(j-1)!}{2\,|\log t|^j},
\]
 which are the first two terms on the right hand side of \eqref{kj}.
Thus, the terms $E_j$ also satisfy  the recurrence 
\[
E_{j+1}=t\partial_tE_{j} \quad j\ge 2. 
\]
To prove \eqref{fEj} for $j>2$ it is enough to verify   that
 the expression satisfies the above recurrence, 
which follows by a straightforward induction in $j$. 
   
Now, from \eqref{rcum1j} and \eqref{f2kapa},  
\[
\partial_\lambda\,\kappa_r(\lambda)=\frac{\kappa_{r+1}(\lambda)}{\lambda}=
 \sum_{\ell \geq 1} 
 \,\frac{\ell^{r+1}\,A_{r}(\lambda^\ell)}{\lambda(1-\lambda^\ell)^{r+1}}.  
\]
Since $A_r(\lambda^\ell)/\lambda$ is a polynomial in $\lambda$ for each $r\ge 1$,  
 the above series is  easily seen to be bounded from above by a constant 
 $C_r$ if $\lambda<\frac 12$, say,  so we conclude from the 
mean value theorem that, if $t>\e^{-\frac \pi 2}$ 
(and then $\e^{-\frac{4\pi^2}{|\log t|}} <\e^{-8\pi}<\frac 12$),   
\begin{equation}
\label{karle}
\kappa_r(\e^{-4\pi^2/|\log t|}) \le C_r\,\e^{-4\pi^2/|\log t|}. 
\end{equation}
In addition recall that the Eulerian polynomials $A_r$  have non negative coefficients,
 and coefficient one  in the linear terms  for $r \ge 1$, to obtain from
    \eqref{f1kapa} by just taking the first term in each series  the following  lower bounds:  
\[
\kappa_r(\e^{-4\pi^2/|\log t|})
\ge\frac{ A_r(\e^{-4\pi^2/|\log t|})}{(1- \e^{-4\pi^2/|\log t|})^{r+1}}
\ge \e^{-4\pi^2/|\log t|}.
\]
Thus, \eqref{aEj}  holds. Indeed, from  \eqref{karle} and \eqref{fEj} we conclude
 that there is a positive  constant $C_j$ that may depend on $j$ such that for any $t>\e^{-\frac \pi 2}$,
 \begin{equation}
 \label{Ejle}
 |E_j(t)|\le \frac{C_j}{|\log t|^{2j}}\,\e^{-4\pi^2/|\log t|}. 
  \end{equation}
 \end{proof}
 We computed in Lemma \ref{lemacum} the derivatives at zero of
  the  cumulant generating function $K_{X(t)}$.
 To estimate the remainder in the Taylor formula we need also  the
  derivatives of  $K_{Z(t)}(\theta)$ 
  at $\theta\neq 0$.  
 Recall that 
\[
K_{Z(t)}(\theta)=K_{X(t)}\big(\theta/\sigma(t)\big)-i \theta\,\kappa_1(t)/\sigma(t).
\]
\begin{lemma}
\label{lemaderK}
For each $j\ge 2$,
$\theta\in \mathbb R$,  
\[ 
\partial^{(j)}_\theta  K_{Z(t)}(\theta) =
\frac{i^j}{(\sigma(t))^j}\,\kappa_j(t\, {\rm e}^{i\theta/\sigma(t)}),
\]
where the functions $\kappa_j(z)$ are defined for $z\in \mathbb C, |z|<1$ by formula \eqref{f1kapa}:  
\[
\kappa_j(z)=
  \sum_{\ell \geq 1} \,\frac{\ell^{j-1} \,A_{j}(z^\ell)}{(1-z^\ell)^{j+1}}.
  \]
\end{lemma}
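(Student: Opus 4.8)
The plan is to reduce the claim to the real-variable recursion \eqref{rcum1j} together with the explicit series of Lemma \ref{lemacum}, by analytic continuation in the argument. Recall the identity stated just above, $K_{Z(t)}(\theta)=K_{X(t)}\big(\theta/\sigma(t)\big)-i\theta\,\kappa_1(t)/\sigma(t)$. Since $j\ge2$, the affine term contributes nothing after differentiation, so $\partial_\theta^{(j)}K_{Z(t)}(\theta)=\sigma(t)^{-j}\,\big(\partial_\psi^{(j)}K_{X(t)}(\psi)\big)\big|_{\psi=\theta/\sigma(t)}$. By \eqref{deffi} and \eqref{eulerf}, $K_{X(t)}(\psi)=\log f({\rm e}^{i\psi}t)-\log f(t)$, and $f$ is holomorphic and non-vanishing on the simply connected disc $\{|w|<1\}$ (each factor $1/(1-w^\ell)$ is), so $\log f$ admits there a single-valued holomorphic branch; the additive constant $\log f(t)$ disappears after one derivative, and the derivatives of $\psi\mapsto\log f({\rm e}^{i\psi}t)$ are in any case branch-independent. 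Thus it suffices to show $\partial_\psi^{(j)}\log f({\rm e}^{i\psi}t)=i^j\,\kappa_j({\rm e}^{i\psi}t)$ for $\psi\in\mathbb R$.

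Next I would pass to the variable $w={\rm e}^{i\psi}t$. Since $dw/d\psi=iw$, the chain rule shows that differentiating a holomorphic function of $w$ with respect to $\psi$ amounts to applying the operator $i\,w\partial_w$; as $w\partial_w$ preserves holomorphy, this iterates, and $\partial_\psi^{(j)}$ corresponds to $(i\,w\partial_w)^{(j)}=i^j\,(w\partial_w)^{(j)}$. Hence $\partial_\psi^{(j)}\log f({\rm e}^{i\psi}t)=i^j\big((w\partial_w)^{(j)}\log f(w)\big)\big|_{w={\rm e}^{i\psi}t}$, and everything comes down to identifying $(w\partial_w)^{(j)}\log f(w)$ on $\{|w|<1\}$ with the function $\kappa_j(w)=\sum_{\ell\ge1}\ell^{j-1}A_j(w^\ell)/(1-w^\ell)^{j+1}$ of \eqref{f1kapa}.

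Both of these are holomorphic on the unit disc: the left-hand side is a finite iterate of the operator $w\partial_w$ applied to the holomorphic function $\log f$, and the right-hand side is the series in \eqref{f1kapa}, which converges locally uniformly on $\{|w|<1\}$ because $A_j(0)=0$ (so its $\ell$-th term is $O(|w|^\ell)$ on compacta), as already noted in Lemma \ref{lemacum}. On the real interval $(0,1)$ they agree: by \eqref{rcum1j} one has $(t\partial_t)^{(j)}\log f(t)=\kappa_j(t)$, and Lemma \ref{lemacum} identifies $\kappa_j(t)$ with precisely this series. By the identity theorem the two holomorphic functions therefore agree throughout $\{|w|<1\}$. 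Substituting $w={\rm e}^{i\psi}t$ and then $\psi=\theta/\sigma(t)$ yields $\partial_\theta^{(j)}K_{Z(t)}(\theta)=\sigma(t)^{-j}i^j\,\kappa_j(t\,{\rm e}^{i\theta/\sigma(t)})$, as claimed. The only mild subtlety is the bookkeeping that $\partial_\psi$ iterates to $i^j(w\partial_w)^{(j)}$ and the observation that the curve ${\rm e}^{i\psi}t$, $\psi\in\mathbb R$, remains inside the disc of holomorphy of $\log f$; neither presents a genuine analytic obstacle, and I expect the write-up to be short.
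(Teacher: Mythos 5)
Your proof is correct. The paper reaches the same formula by a more hands-on route: it writes $K_{Z(t)}(\theta)=L(t\,{\rm e}^{i\theta/\sigma(t)})-L(t)-i\theta\,\kappa_1(t)/\sigma(t)$ with $L(z)=\sum_{\ell\ge1}\frac{1}{\ell}\frac{z^\ell}{1-z^\ell}$, differentiates this series term by term to get $\partial_\theta K_{Z(t)}(\theta)=\frac{i}{\sigma(t)}\big(\kappa_1(t\,{\rm e}^{i\theta/\sigma(t)})-\kappa_1(t)\big)$, and then iterates using the observation that the complex functions $\kappa_j(z)$ of \eqref{f1kapa} satisfy $z\partial_z\kappa_j(z)=\kappa_{j+1}(z)$ --- checked by the same Eulerian-polynomial recurrence \eqref{recEP} used in Lemma \ref{lemacum}, now with complex argument. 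You instead transfer the real-variable identity $(t\partial_t)^{(j)}\log f(t)=\kappa_j(t)$ from $(0,1)$ to the whole disc by the identity theorem and then apply the chain rule $\partial_\psi=i\,w\partial_w$ along $w={\rm e}^{i\psi}t$. Both arguments hinge on the same operator identity; yours avoids redoing the term-by-term computation at the cost of invoking holomorphy of $\log f$ and of the series \eqref{f1kapa} on the disc, both of which you justify adequately ($f$ is zero-free on the simply connected disc, and the series converges locally uniformly because $A_j(0)=0$). Your handling of the branch of the logarithm is, if anything, slightly more careful than the paper's, which states the principal value but in effect works with the continuous branch furnished by the series $L$.
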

\begin{proof} Observe first that, from  the same reasoning used to prove \eqref{f1kapa},
  the functions 
$\kappa_j(z)$ satisfy the recurrence $z\partial_z \kappa_j(z)=\kappa_{j+1}(z)$. 
From the expression $K_{Z(t)}(\theta)=L(t \e^{i\theta/\sigma(t)})-L(t)-i\theta
 \frac{\kappa_1(t)}{\sigma(t)}$
in terms of  $L(z):=\sum_{\ell\ge 1}\frac 1\ell \frac{z^\ell}{1-z^\ell}$,
  it is straightforward to see by differentiating the series that 
\begin{align}
\nn
\partial_\theta  K_{Z(t)}(\theta) =&\frac{i}{\sigma(t)}\Big(t \e^{i\theta/\sigma(t)}\,
 \sum_{\ell \geq 1} \,\frac{(t\, \e^{i\theta/\sigma(t)})^{\ell-1}}{(1-(t\, \e^{i\theta/\sigma(t)})^\ell)^{2}}
 -\kappa_1(t)\Big )
 \\ \nn
=&\frac{i}{\sigma(t)}\,\big(\kappa_1(t\, \e^{i\theta/\sigma(t)}) -\kappa_1(t)\big ).
\end{align}
Differentiating the last expression  we obtain, using the recurrence for the $\kappa_j$'s 
\[
\partial^{(2)}_\theta  K_{Z(t)}(\theta)=\big(\frac{i}{\sigma(t)}\big)^2\,t \,\e^{i\theta/\sigma(t)}
\kappa_1'(t \e^{i\theta/\sigma(t)})=\frac{i^2}{(\sigma(t))^2}\,\kappa_2(t\, \e^{i\theta/\sigma(t)}),
\]
which is the desired expression for $j=2$. Successive differentiation
 using the recurrence yields the general formula. 
\end{proof}
\medskip
\begin{corollary}
\label{derKZ}
The derivatives of $ K_{Z(t)}(\theta)$ satisfy
\begin{align}
\nn 
&{\rm{a)}}\quad|\partial^{(j)}_\theta  K_{Z(t)}(\theta) |\le \frac{\kappa_j(t)}{(\sigma(t))^j}
\\ \nn
&{\rm{b)}}\quad\frac{1}{j!}\,|\partial^{(j)}_\theta  K_{Z(t_n)}(\theta) |\le
C_j\, \big(\,\frac{2\pi^2}{3}\,(n-\frac{1}{24})+
\frac 14\,\big)^{\frac{2-j}{4}},\quad j\ge 3, 
 \end{align}
for some positive constant $C_j$ that may depend on $j$. 
\end{corollary}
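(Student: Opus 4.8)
The plan is to reduce both statements to the representation of the derivatives of $K_{Z(t)}$ given by Lemma \ref{lemaderK}, combined with the asymptotic information on the cumulants $\kappa_j(t)$ and on $\sigma(t_n)$ already assembled in Proposition \ref{cumulants>2} and Corollary \ref{cormsigma}.

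For part a), I would start from the identity $\partial^{(j)}_\theta K_{Z(t)}(\theta)=\frac{i^j}{\sigma(t)^j}\,\kappa_j(t\,{\rm e}^{i\theta/\sigma(t)})$ of Lemma \ref{lemaderK} and the observation that, for $|z|<1$, the function $z\mapsto\kappa_j(z)$ is given by a power series with non-negative coefficients. Indeed, by \eqref{defpEu} one has $\frac{A_j(z^\ell)}{(1-z^\ell)^{j+1}}=\sum_{k\ge0}k^j z^{\ell k}$, so $\kappa_j(z)=\sum_{\ell\ge1}\sum_{k\ge0}\ell^{j-1}k^j z^{\ell k}$, which is absolutely convergent for $|z|<1$ (as already noted in Lemma \ref{lemacum}) with all coefficients non-negative. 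Since $|t\,{\rm e}^{i\theta/\sigma(t)}|=t$ for real $\theta$, the triangle inequality immediately yields $|\kappa_j(t\,{\rm e}^{i\theta/\sigma(t)})|\le\kappa_j(t)$, and hence $|\partial^{(j)}_\theta K_{Z(t)}(\theta)|\le\kappa_j(t)/\sigma(t)^j$, which is a).

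For part b), set $t=t_n$ and use a) to bound $\frac1{j!}|\partial^{(j)}_\theta K_{Z(t_n)}(\theta)|$ by $\frac{\kappa_j(t_n)}{j!\,\sigma(t_n)^j}$. From Proposition \ref{cumulants>2} together with the bound \eqref{Ejle} (applicable for every $n\ge1$ since $t_n>{\rm e}^{-\pi/2}$ by \eqref{tnge}) the leading term dominates, so $\frac{\kappa_j(t_n)}{j!}\le\frac{C}{|\log t_n|^{j+1}}$; likewise from \eqref{sigma2a} one gets $\sigma^2(t_n)=\kappa_2(t_n)\ge\frac{c}{|\log t_n|^{3}}$, hence $\sigma(t_n)^{-j}\le C'\,|\log t_n|^{3j/2}$. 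Multiplying, $\frac{\kappa_j(t_n)}{j!\,\sigma(t_n)^j}\le C''\,|\log t_n|^{(j-2)/2}$. Finally I would invoke \eqref{tna} to write $|\log t_n|=\frac{\pi^2}{3(\frac12+r_n)}\,(1+o(1))$, so $|\log t_n|\le C'''\,r_n^{-1}=C'''\big(\frac{2\pi^2}{3}(n-\frac1{24})+\frac14\big)^{-1/2}$; since $j\ge3$ the exponent $(j-2)/2$ is positive, and raising to that power gives exactly $C_j\big(\frac{2\pi^2}{3}(n-\frac1{24})+\frac14\big)^{\frac{2-j}{4}}$, as claimed.

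The argument is essentially bookkeeping once a) is in place; the only point requiring a little care is that the constants be uniform in $n\ge1$ rather than merely asymptotic, which is guaranteed by the uniform lower bound $t_n>{\rm e}^{-\pi/2}$ and by the fact that $|\log t_n|$ stays in a bounded set, so the exponentially small remainders $E_j(t_n)$ are harmless throughout and the subtracted lower-order terms in \eqref{kj} and \eqref{sigma2a} do not spoil the displayed bounds. I do not anticipate any genuine obstacle here: the substantive work — the functional equations and asymptotics for the cumulants, and the estimates for $t_n$ — has already been carried out in the preceding sections and in this appendix.
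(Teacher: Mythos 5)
Your proposal is correct and follows essentially the same route as the paper: part a) rests on Lemma \ref{lemaderK} together with the positivity of the coefficients (the paper bounds numerator and denominator of each term separately via $|A_j(w^\ell)|\le A_j(t^\ell)$ and $|1-w^\ell|\ge 1-t^\ell$, whereas you expand $\kappa_j(z)$ as a nonnegative double power series and apply the triangle inequality — the same idea), and part b) combines \eqref{kj}, \eqref{Ejle}, \eqref{sigma2a} and \eqref{tna} exactly as the paper does. The uniformity-in-$n$ point you raise is handled in the paper by the same observation, namely $t_n>{\rm e}^{-\pi/2}$.
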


\begin{proof}
From Lemma  \ref {lemaderK}, 
\[ |\partial^{(j)}_\theta  K_{Z(t)}(\theta)|=\frac{1}{(\sigma(t))^j} 
\big|\sum_{\ell \geq 1} \,
\frac{\ell^{j-1} \,A_{j}((t \e^{i\theta/\sigma(t)})^\ell)}{\big(1-(t \e^{i\theta/\sigma(t)})^\ell\big)^{j+1}}
\big|
\le
\frac{1}{(\sigma(t))^j} 
\sum_{\ell \geq 1} \,
\frac{\ell^{j-1} \,A_{j}(t^\ell)}{(1-t^\ell)^{j+1}}, 
\]
where to estimate  the denominators we use  that
$| 1-(t \e^{i\theta/\sigma(t)})^\ell|\ge 1-t^\ell$, 
and to estimate de numerators it is enough to observe that the Eulerian polynomials 
have real  positive coefficients. The right hand side above is precisely that in item a)
 in the statement. 
To prove the inequality b), we write, from \eqref{kj} 
for $\kappa_j(t)$ and
 $\sigma^2(t)=\kappa_2(t)$:
\[\frac{\kappa_j(t)}{j!\,(\sigma(t))^j}=\frac{1}{2}\,
\frac{\frac{\pi^2}{3|\log t|}-\frac 1j +\frac{2\,|\log t|^j}{j!}\,E_j(t)}
{\big(\frac{\pi^2}{3|\log t|}-\frac 12 +|\log t|^2\,E_2(t)\big)^{j/2}}.
\]
For each  $j\ge 3$, we take  $t=t_n$ above  and  use \eqref{Ejle} and  \eqref{tna}  to conclude that
{\rm b)} in the statement holds. 
\end{proof}


\begin{thebibliography}{10}

\bibitem{LBD}
Luis B{\'a}ez-Duarte.
\newblock {H}ardy--{R}amanujan's asymptotic formula for partitions and the
  central limit theorem.
\newblock {\em Advances in Mathematics}, 125(1):114--120, 1997.

\bibitem{RNII}
Bruce~C. {Berndt}.
\newblock {\em {Ramanujan's notebooks. Part II.}}
\newblock New York etc.: Springer-Verlag, 1989.

\bibitem{BM}
Stella Brassesco and Miguel~A. M{\'e}ndez.
\newblock The asymptotic expansion for n! and the {L}agrange inversion formula.
\newblock {\em The Ramanujan Journal}, 24(2):219--234, 2011.

\bibitem{BO13}
Jan~Hendrik Bruinier and Ken Ono.
\newblock Algebraic formulas for the coefficients of half-integral weight
  harmonic weak {M}aass forms.
\newblock {\em Advances in Mathematics}, 246:198--219, 2013.

\bibitem{com}
Louis Comtet.
\newblock {\em Advanced combinatorics : the art of finite and infinite
  expansions}.
\newblock Dordrecht, Holland Boston, U.S.A. D. Reidel Publishing Company, 1974.

\bibitem{DM}
Michael Dewar and M.~Ram Murty.
\newblock A derivation of the {H}ardy-{R}amanujan formula from an arithmetic
  formula.
\newblock {\em Proceedings of the Amererican Mathematical Society},
  141:1903--1911, 2013.

\bibitem{eg}
Emil {G}rosswald.
\newblock Comments on some formulae of {R}amanujan.
\newblock {\em Acta {A}rithmetica}, 21:25--34, 1972.

\bibitem{HR18}
G.~H. Hardy and S.~Ramanujan.
\newblock Asymptotic formulae in combinatory analysis.
\newblock {\em Proceedings of the London Mathematical Society},
  s2-17(1):75--115, 1918.

\bibitem{Rad37}
{H}ans Rademacher.
\newblock A convergent series for the partition function p(n).
\newblock {\em Proceedings of the National Academy of Science}, 23:78--84,
  February 1937.

\bibitem{Sh}
{A.N.} Shiryaev.
\newblock {\em Probability, {S}econd Edition}.
\newblock Springer-Verlag New York Inc., 1996.

\end{thebibliography}
\end{document}